\providecommand{\U}[1]{\protect\rule{.1in}{.1in}}
\numberwithin{equation}{section}%
\numberwithin{figure}{section}
\newcommand{\ignore}[1]{}
\newtheorem{theorem}{Theorem}
\newtheorem{lemma}[theorem]{Lemma}
\newtheorem{remark}[theorem]{Remark}
\newenvironment{proof}[1][Proof]{\noindent\textbf{#1.} }{\ \rule{0.5em}{0.5em}}
\begin{document}

\title{
Asymptotics of self-similar solutions to
coagulation equations with product kernel}
\author{J.B. McLeod \footnote{Mathematical Institute,
 University of Oxford, 24--29 St Giles', Oxford OX13LB, United Kingdom. {\tt E-mail}: mcleod@maths.ox.ac.uk}, Barbara Niethammer \footnote{Mathematical Institute, University of Oxford, 24--29 St Giles', Oxford OX13LB, United Kingdom. {\tt E-mail}: niethammer@maths.ox.ac.uk}
and J.J.L. Vel\'azquez \footnote{ICMAT,
 C/ Nicolas Cabrera 15, Cantoblanco, 28049 Madrid. {\tt E-mail}:jj$\mbox{}_{-}$velazquez@icmat.es}
}
\date{}
\maketitle

\begin{abstract}
We consider mass-conserving self-similar solutions for Smoluchowski's coagulation equation
with kernel $K(\xi,\eta)= (\xi \eta)^{\lambda}$ with $\lambda \in (0,1/2)$.
It is known that such self-similar solutions $g(x)$ satisfy that $x^{-1+2\lambda} g(x)$ is bounded
above and below as $x \to 0$. In this paper we describe in detail
 via formal asymptotics  
the qualitative
behavior of a suitably
 rescaled function $h(x)=h_{\lambda} x^{-1+2\lambda} g(x)$ in the limit $\lambda \to 0$.
It turns out that  $h \sim 1+ C x^{\lambda/2} \cos(\sqrt{\lambda} \log x)$ 
as $x \to 0$.  As $x$ becomes larger
 $h$ develops peaks of height $1/\lambda$ that are separated
by large regions where $h$ is small. Finally, $h$ converges to zero exponentially fast as
$x \to \infty$. Our analysis is based on different approximations of a nonlocal operator, that
reduces the original equation in certain regimes to a system of ODE. 

\end{abstract}

\section{Introduction}

\bigskip In this article we consider self-similar solutions to Smoluchowski's
mean-field model for coagulation \cite{Drake,Smolu} that is given by the
equation
\begin{equation}%
\begin{split}
\partial_{t}f\left(  \xi,t\right)   &  =\frac{1}{2}\int_{0}^{\xi}K\left(
\xi-\eta,\eta\right)  f\left(  \xi-\eta,t\right)  f\left(  \eta,t\right)  d\eta\\
& \quad-\int_{0}^{\infty}K\left(  \xi,\eta\right)  f\left(  \xi,t\right)
f\left(  \eta,t\right)  d\eta=:Q\left[  f\right]\,, \label{S1E1}%
\end{split}
\end{equation}
where $f(\xi,t)$ denotes the number density of clusters of size $\xi$ at time
$t$. The kernel $K(\xi,\eta)$ describes the rate of coalescence of clusters of
size $\xi$ and $\eta$ and subsumes all the microscopic aspects of the
coagulation process. It is well-known, cf. for example the review article
\cite{LM1} and references therein,  that if the kernel grows at most linearly,
then the initial value problem for (\ref{S1E1}) is well-posed for initial data
with finite mass and the mass is conserved for all times. For homogeneous
kernels it is furthermore expected that solutions converge to a self-similar
form for large times. In fact, for  the kernels $K(\xi,\eta) \equiv1$ and
$K(\xi,\eta)=\xi+\eta$, the constant and additive kernel respectively, this
issue has by now been completely solved. It has been established
\cite{Bertoin1,MP1} that besides explicitly known exponentially decaying
self-similar solutions also solutions with algebraic decay exist. In
\cite{MP1} their domains of attraction could also be completely characterized.
However, not much is known about self-similar solutions for other kernels than
these solvable ones. Existence of fast-decaying self-similar solutions has been
established for a large class of kernels \cite{FL1,EMR}, and local properties
of such solutions have been investigated in \cite{CM,EM,FL2,NV10}, but it is
not known whether they are unique in the class of solutions with finite mass.
It is also not clear, not even on the formal level, whether self-similar
solutions with algebraic decay exist.

In this article we will consider mass-conserving self-similar solutions to
coagulation equations with the following product kernel of homogeneity
$2\lambda\in(0,1)$:
\begin{equation}
K\left(  \xi,\eta\right)  =\left(  \xi\eta\right)  ^{\lambda}%
\  ,\\ \ \ 0<\lambda<\frac{1}{2}\,. \label{S1E2}%
\end{equation}
Existence of  a solution for this kernel
has been established in \cite{FL1}. The purpose of this 
paper is to give an asymptotic description of such
a solution
in the regime
$\lambda\to0$.

Let us describe briefly what is known for self-similar solutions for kernels
as in (\ref{S1E2}) and compare it to results for the so-called sum
kernels. To fix ideas, we restrict ourselves to kernels of the form
\begin{equation}
\label{S1E2b}K(\xi,\eta) = \xi^{\alpha}\eta^{\beta}+\xi^{\beta}\eta^{\alpha},
\qquad\alpha+\beta= 2 \lambda, \; 0\leq\alpha\leq\beta,
\end{equation}
even though most of the results that we mention also apply to more general
kernels with the same growth behavior as the ones in (\ref{S1E2b}). We will
need in particular to distinguish between the case $\alpha=0$, the sum
kernel, and the case $\alpha>0$, the product kernel.
 It has long been predicted \cite{Le1,DE1}
that self-similar solutions for kernels as in (\ref{S1E2b}) exhibit a singular
power-law behavior of the form $x^{-\tau}$ with $\tau<1+2\lambda$ in the case
$\alpha=0$, and $\tau = 1+2\lambda$ for the case $\alpha>0$. This has been
rigorously proved for the case $\alpha=0$ in \cite{FL2} and for the case
$\alpha>0$ in \cite{EM,NV10}. As has been pointed out in \cite{FL2}, the next
order behavior for small clusters in the case $\alpha=0$ can then easily be
established and is as predicted by the physicists. In the case $\alpha>0$,
however, the next order behavior has not been known \cite{DE1}, and only
numerical simulations suggested that it is oscillatory \cite{FilL1,Lee}.

Our goal in this paper is to describe formally how to construct
mass-\linebreak conserving self-similar solutions for  kernels as in
(\ref{S1E2}) in the limit $\lambda\to0$ and by this also describe their
asymptotic behavior on the whole positive real line. While we believe that
such self-similar solutions are unique, our approach only gives the
construction of one such solution. In forthcoming work \cite{NV11} we will
also show how to make this construction rigorous. We will see that in the
limit $\lambda\to0$ the oscillatory character of the solutions becomes very
explicit and can be interpreted in terms of simple ODEs coupled with explicitly
solvable equations.

In the limit $\lambda\rightarrow0^{+}$ the product kernel becomes close
to the sum kernel $x^{2\lambda}+y^{2\lambda}$.
 As described above the power law
for the sum kernel is different from the one for the product
kernel. This is due to the fact that, thinking in terms of coagulating
particles, the physical behaviour of these particles is different in these two
cases. Indeed, the power law obtained for the sum case shows that
particles with small $x$ coalesce mostly with particles that are much bigger
than themselves. On the contrary, in the case of the product kernel
small particles interact mostly with the ones having a comparable size. The
point $\lambda=0$ can be thought as a bifurcation point where both kind of
behaviours take place simultaneously. 

\section{Preliminaries and Overview}

\subsection{Equation for self-similar solutions}

We are now going to derive the equations that are solved by self-similar
solutions. It is known that the coagulation equation (\ref{S1E1}) can be
written in the following conservative form that makes the
conservation of the number of monomers $\int_{0}^{\infty}\xi f\left(  \xi,t\right)
d\xi$ transparent.
\begin{equation}
\partial_{t}\left(  \xi f\left(  \xi,t\right)  \right)  +\partial_{\xi}\left(
\int_{0}^{\xi}d\eta\int_{\xi-\eta}^{\infty}d\rho K\left(  \eta,\rho\right)  \eta f\left(
\eta,t\right)  f\left(  \rho,t\right)  \right)  =0\,. \label{S2E1}%
\end{equation}

Using the self-similar variables
\begin{equation}
f\left(  \xi,t\right)  =t^{-\frac{2}{1-2\lambda}}g\left(  x\right)
\  ,\ \ \ x=\frac{\xi}{t^{\frac{1}{1-2\lambda}}}\,, \label{S1E3}%
\end{equation}
as well as the form of the kernel (\ref{S1E2}), equation (\ref{S2E1}) becomes
\[
-\partial_{x}\left(  \frac{x^{2}g}{\left(  1{-}2\lambda\right)  }\right)
+\partial_{x}\left(  \int_{0}^{x}dy\int_{x-y}^{\infty}dzK\left(  y,z\right)
yg\left(  y\right)  g\left(  z\right)  \right)  =0\,.
\]
Integrating this equation, assuming decay of the solutions as $x\rightarrow
\infty$ and imposing absence of particle fluxes, we obtain
\begin{equation}
\frac{x^{2}g\left(  x\right)  }{\left(  1{-}2\lambda\right)  }=\int_{0}%
^{x}dy\int_{x-y}^{\infty}dzK\left(  y,z\right)  yg\left(  y\right)  g\left(
z\right)\,.  \label{S2E2}%
\end{equation}
The balance of the terms on both sides of (\ref{S2E2}) suggests the following
power law behaviour for $g$ near the origin,
\begin{equation}
g\left(  x\right)  \sim\frac{H_{\lambda}}{\left(  1{-}2\lambda\right)  }\frac
{1}{x^{1+2\lambda}}\ \ \text{as\ \ }x\rightarrow0^{+}\  ,\ \ \ H_{\lambda
}=\frac{\lambda}{B\left(  1{-}\lambda,1{-}\lambda\right)  } \label{S2E3}%
\end{equation}
where $B\left(  \cdot,\cdot\right)  $ is the classical Beta function
\cite{AS}. For further reference we notice that
\begin{equation}
\label{S2E3b}B\left(  1{-}\lambda,1{-}\lambda\right) \sim1+2\lambda+ o(\lambda) \qquad
\mbox{ as } \lambda \to 0.
\end{equation}

In order to remove the power law behaviour we introduce the new function
\begin{equation}
h\left(  x\right)  =\frac{\left(  1{-}2\lambda\right)  }{H_{\lambda}%
}x^{1+2\lambda}g\left(  x\right)\,.  \label{S2E4}%
\end{equation}
Then $h$ solves
\begin{equation}
h\left(  x\right)  =H_{\lambda}x^{2\lambda-1}\int_{0}^{x}\,dy\,y^{-2\lambda}h\left(
y\right)  \int_{x-y}^{\infty}dzK\left(  y,z\right)  z^{-\left(
1+2\lambda\right)  }h\left(  z\right) \,. \label{S2E5}%
\end{equation}
In the rest of the paper we will study solutions of (\ref{S2E5}) in the limit
$\lambda\to0$ that satisfy $h(x) \to0$ as $x \to\infty$. Notice that
(\ref{S2E5}) has the explicit constant solution $h\left(  x\right)  =1$ that
corresponds to the well-known solution with infinite mass
$g\left(  x\right)  =\frac{H_{\lambda}}{\left(
1-2\lambda\right)  }\frac{1}{x^{1+2\lambda}}$ in the formulation (\ref{S2E2}).

\subsection{Reformulation as a Volterra integro-differential equation}

\label{ShootingProblem}

A main idea in our approach is to reformulate the problem of finding solutions
of (\ref{S2E5}) as a shooting problem for a Volterra integro-differential
equation. Indeed, we  can rewrite (\ref{S2E5}) as
\begin{align}
h\left(  x\right)   &  =\frac{H_{\lambda}}{\lambda\left(  1{-}\lambda\right)
}u\left(  x\right)  v\left(  x\right)  +H_{\lambda}x^{2\lambda-1}\int_{0}%
^{x}dy\int_{x-y}^{x}dzy^{-\lambda}z^{-\left(  \lambda+1\right)  }h\left(
y\right)  h\left(  z\right)\,, \label{S3E4}\\
u\left(  x\right)   &  =\left(  1{-}\lambda\right)  x^{\lambda-1}\int_{0}%
^{x} dy y^{-\lambda}h\left(  y\right)  \  ,\ \ \ \ v\left(  x\right)  =\lambda
x^{\lambda}\int_{x}^{\infty} dz z^{-\left(  \lambda+1\right)  }h\left(  z\right)
\,. \label{S3E5}%
\end{align}
Differentiating (\ref{S3E5}) we obtain the equations
\begin{equation}
xu_{x}=-\left(  1-\lambda\right)  u+\left(  1-\lambda\right)
h\  ,\ \ \ \ xv_{x}=\lambda v-\lambda h\,. \label{S3E7}%
\end{equation}
Thus the solution of (\ref{S2E5}) solves (\ref{S3E7}) with $h$ as in
(\ref{S3E4}), that has the advantage of being a Volterra integro-differential
equation. However, the problem (\ref{S3E4}), (\ref{S3E7}) admits solutions
that in general do not decay as $x\rightarrow\infty$ and therefore they do not
provide solutions of (\ref{S2E5}). 
 Therefore, among all the solutions of (\ref{S3E4}),
(\ref{S3E7}) we must select those satisfying
\begin{equation}
v\left(  x\right)  \rightarrow0\  ,\ \ \ \ h\left(  x\right)  \rightarrow
0\  ,\ \ \ \ u\left(  x\right)  \rightarrow0\ \ \text{ as \ \ }x\rightarrow
\infty\,.\label{S3E7a}%
\end{equation}

\subsection{Overview of different regimes}

We now sketch the asymptotics of the solution that we are going to construct. It
passes, roughly speaking, through three stages that are described in detail in Sections
\ref{S.zero}, \ref{CombRegion} and \ref{S.asymptotics} respectively.

Section \ref{S.zero} discusses the behavior of the solution near $x =0$. As
pointed out before, near the origin the solution is oscillatory and behaves as
\begin{equation}
\label{oscillations}h\left(  x\right)  \sim1+Cx^{\beta\left(  \lambda\right)
}\cos\left(  \alpha\left(  \lambda\right)  \log\left(  x\right)
+\varphi\right)  \ \ \text{as\ \ }x\rightarrow0^{+}
\end{equation}
where $\beta(\lambda) \sim\lambda/2$ and $\alpha(\lambda) \sim\sqrt{\lambda}$
as $\lambda\to0$. Our analysis consists of constructing a solution by starting
with $h$ as in (\ref{oscillations}) and using $K=Ce^{i\varphi}$ as a shooting
parameter. However, since equation (\ref{S2E5}) is invariant under the
rescaling $x \mapsto a x$ for $a>0$, we can restrict the range of $K$ to
$[1,\exp{2 \pi(\beta/\alpha)})$ (see also the comment in Section
\ref{Ss.variables}). In this regime near the origin we can approximate
(\ref{S3E4}) and (\ref{S3E5}) by a nonlinear ODE (see (\ref{S5E6})), that is
a
perturbation of a simple ODE system. For the perturbed system we
can use an adiabatic approximation to compute the increase of an associated
energy $E$ along trajectories. As we discuss in Section \ref{Ss.validity} this
first ODE approximation is valid as long as $E \ll1/\lambda$. When $E
\sim1/\lambda$ we enter a new regime that is described in Section
\ref{CombRegion}. In this regime $h$ develops peaks of height $1/\lambda$ and
width of order one that are separated by wide regions in which $h$ is small.
 The regime where $h$ is small is again described by an ODE
(cf. Section \ref{Ss.ode}), while the peaks are described by an
integro-differential equation (see Section \ref{Ss.integro}). The analysis of
these respective regimes is done in Sections \ref{Ss.intanalysis} and
\ref{Ss.odeanalysis}, while their coupling is described in
Section \ref{asymptPeaks}. In Section \ref{S.shooting} we will then show by a
continuity argument that there exists a shooting parameter such that the
corresponding solution $h$ converges to zero as $x \to\infty$. In the third
regime, that is discussed in Section \ref{S.asymptotics}, 
$h$ then decays exponentially fast to zero. 

\section{The behaviour as $x \to0$}

\label{S.zero}

\subsection{Oscillations}

It has been observed in \cite{FilL1,Lee} that the self-similar solutions
associated with (\ref{S1E1}) exhibit oscillations for small values of $x.$ This
oscillatory behaviour can be seen as follows. If we make the ansatz
$h(x)=1+\alpha x^{\mu} + \cdots $  with some $\mu\in\mathbb{C}$,
plug this into (\ref{S2E5}) and keep only the leading order terms, we obtain
that $\mu$ must satisfy
\[%
\begin{split}
\frac{1}{H_{\lambda}}&  =\int_{0}^{1} dy y^{-2\lambda}\int_{1-y}^{\infty}dzK\left(
y,z\right)  z^{-\left(  1+2\lambda\right)  +\mu}
\\
&
+\int_{0}^{1} dy y^{-2\lambda+\mu}\int_{1-y}^{\infty}dzK\left(
y,z\right)  z^{-\left(  1+2\lambda\right)  }\,.%
\end{split}
\]
After some computations, we find that this is equivalent to 
\begin{equation}
\Psi_{\lambda}\left(  \mu\right)  : =\left(  \lambda{-}\mu\right)  B\left(
1{-}\lambda,1{-}\lambda\right)  -\left(  2\lambda{-}\mu\right)  B\left(
1{-}\lambda,1{-}\lambda{+}\mu\right)  =0 \,. \label{S3E3}%
\end{equation}

One can show that $\Psi_{\lambda}$ has two complex conjugate roots 
$\mu_{\lambda}^{\pm}=\pm\alpha\left(  \lambda\right)  i+\beta\left(
\lambda\right)  $ with positive real part and nonzero imaginary part. Indeed, in the limit
$\lambda \to 0$ this can be seen easily as the dominating terms in (\ref{S3E3}) are,
assuming $|\mu| \gg \lambda$, 
\[
\Psi_{\lambda}\left(  \mu\right) \sim (\lambda {-} \mu) (1{+}2\lambda) - (2\lambda{-}\mu)(1
{-}\mu{+}2\lambda + \mu^2)
\]
and thus the roots of $\Psi_{\lambda}$ satisfy
\begin{equation}
\mu_{\lambda}^{\pm}=\frac{\lambda}{2}\pm\sqrt{\lambda}i+O\left(
\lambda^{\frac{3}{2}}\right)  \ \ \text{as\ \ }\lambda\rightarrow0^{+}.
\label{D1E2}%
\end{equation}

Consequently we expect the following asymptotics for the
solutions of (\ref{S2E5}):%
\begin{equation}
h\left(  x\right)  \sim1+Cx^{\beta\left(  \lambda\right)  }\cos\left(
\alpha\left(  \lambda\right)  \log\left(  x\right)  +\varphi\right)
\ \ \text{as\ \ }x\rightarrow0^{+} \label{S2E6}%
\end{equation}
for suitable real constants $C$ and $\varphi.$ 

Notice that the asymptotics (\ref{S2E6}) indicate that two consecutive local
maxima of $h$, called  $x_{1}$ and $ x_{2}$, satisfy
$
\frac{x_{1}}{x_{2}}\approx\exp\left(  \frac{2\pi}{\sqrt{\lambda}}\right)$
for small $\lambda.$ In comparison, to double the amplitude of (\ref{S2E6}) we
must multiply $x$ by a number of order $\exp\left(  \frac{2\log\left(
2\right)  }{\lambda}\right)  $. Thus, the behaviour (\ref{S2E6}) is
essentially oscillatory, with a growth in the amplitude that takes place on a
much larger scale.

\subsection{A new set of variables}

\label{Ss.variables}

For our forthcoming analysis it will be more convenient to use the following variables:
\begin{align}
x  &  =e^{X}  ,\ \ \ y=e^{Y}  ,\ \ \ z=e^{Z}\,,\label{S3E8}\\
h\left(  x\right)   &  =H\left(  X\right)  ,\ \ \ \ u\left(  x\right)
=U\left(  X\right),  \ \  \ \ v\left(  x\right)  =V\left(  X\right)\,.
\label{S3E9}%
\end{align}
Then (\ref{S3E4}), (\ref{S3E7}) becomes
\begin{align}
H   &  =\frac{H_{\lambda}}{\lambda\left(  1{-}\lambda\right)}
\,U  V  +I\left[  H\right] \,,
\label{S3E10}\\
\frac{dU}{dX} &   =-\left(  1{-}\lambda\right)  U  +\left(
1{-}\lambda\right)  H\,, \qquad
\frac{dV}{dX}    =\lambda V  -\lambda H\,,
\label{S3E12}%
\end{align}
where
\begin{equation}
I\left[  H\right]  \left(  X\right)  =H_{\lambda}\int_{-\infty}^{X}%
dYe^{\left(  1-\lambda\right)  \left(  Y-X\right)  }H\left(  Y\right)
\int_{\log\left(  e^{X}-e^{Y}\right)  }^{X}dZe^{-\lambda\left(  Z-X\right)
}H\left(  Z\right)\,.  \label{S3E13}%
\end{equation}

The system (\ref{S3E10})-(\ref{S3E13}) has two explicit solutions, namely
$(H,U,V)$ \linebreak 
$ \equiv(1,1,1)$ and $(H,U,V) \equiv(0,0,0)$. Our goal is to construct
a solution of (\ref{S3E10})-(\ref{S3E13}) for small values of $\lambda$ with
the property
\begin{align}
\lim_{X\rightarrow-\infty}\left(  H\left(  X\right)  ,U\left(  X\right)
,V\left(  X\right)  \right)   &  =\left(  1,1,1\right)\,, \label{S4Ea}\\
\lim_{X\rightarrow\infty}\left(  H\left(  X\right)  ,U\left(  X\right)
,V\left(  X\right)  \right)   &  =\left(  0,0,0\right) \,. \label{S4Eb}%
\end{align}
We will see that a solution that satisfies (\ref{S4Eb}) decays exponentially fast
to zero. This in particular implies that the corresponding self-similar solution
$g$ has finite mass.

Equations
(\ref{S3E8}), (\ref{S3E9}) and (\ref{S2E6}) yield the following asymptotics of
the solutions as $X\rightarrow-\infty$:
\begin{equation}
\left(  H\left(  X\right)  ,U\left(  X\right)  ,V\left(  X\right)  \right)
\sim\left(  1,1,1\right)  +\operatorname{Re}\left(  K\left(  1,\frac{\left(
1{-}\lambda\right)  }{\left(  1{-}\lambda\right)  +\mu^{+}},\frac{-\lambda
}{\left(  \mu^{+}{-}\lambda\right)  }\right)  e^{\mu^{+}X}\right)\,,
\label{S4E0}%
\end{equation}
where $\mu^{+}$ is as in (\ref{D1E2}) and $K=Ce^{i\varphi}%
\in\mathbb{C}.$

The complex number $K$ determines the solution of (\ref{S3E10}%
)-(\ref{S3E13}) uniquely. However, we notice that (\ref{S3E10})-(\ref{S3E13}) is
invariant under translations $X\rightarrow X+a$ 
 and thus all the complex
numbers in the spiral $\mathcal{S}_{\lambda,K}=\left\{  Ke^{\mu^{+}a}%
:a\in\mathbb{R}\right\}  \subset\mathbb{C}$\ yield the same solution up to
translations. Thus we can identify the solutions of
(\ref{S3E10})-(\ref{S3E13}) satisfying (\ref{S4Ea}), up to translations, with
the set of positive real numbers contained between two consecutive
intersections of $\mathcal{S}_{\lambda}$ with the real axis. In particular,
given $K=1$ we have, since $\mu_{\lambda}^{+}=\alpha\left(  \lambda\right)
i+\beta\left(  \lambda\right)$, that the next consecutive point  in
$\mathbb{R}^{+}\cap\mathcal{S}_{\lambda,1}$ is $\exp\left(  \frac{2\pi
\beta\left(  \lambda\right)  }{\alpha\left(  \lambda\right)  }\right)  .$
Therefore, there is a one-to-one correspondence between the points in the
interval $\left[  1,\exp\left(  \frac{2\pi\beta\left(  \lambda\right)
}{\alpha\left(  \lambda\right)  }\right)  \right)  $ and the solutions of
(\ref{S3E10})-(\ref{S3E13}) satisfying (\ref{S4Ea}).

The main result of this paper is that we  show, using
asymptotic arguments, that for small $\lambda$   a value of 
$K\in\left[  1,\exp\left(  \frac{2\pi\beta\left(
\lambda\right)  }{\alpha\left(  \lambda\right)  }\right)  \right)  $ exists such
that (\ref{S4Eb}) holds.

\begin{figure}[ht!]
\centering{%
\includegraphics[width=.46\textwidth]{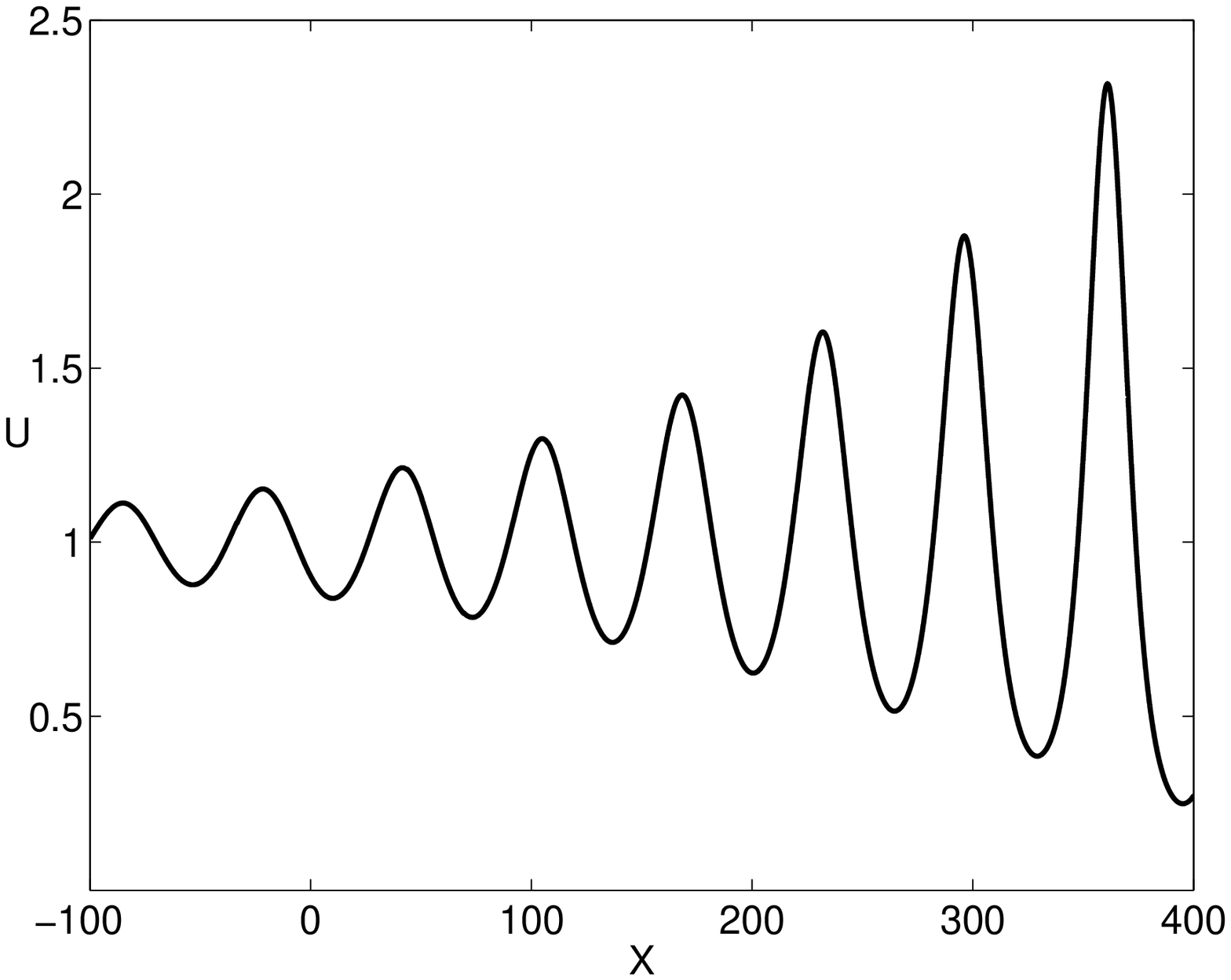}
\includegraphics[width=.46\textwidth]{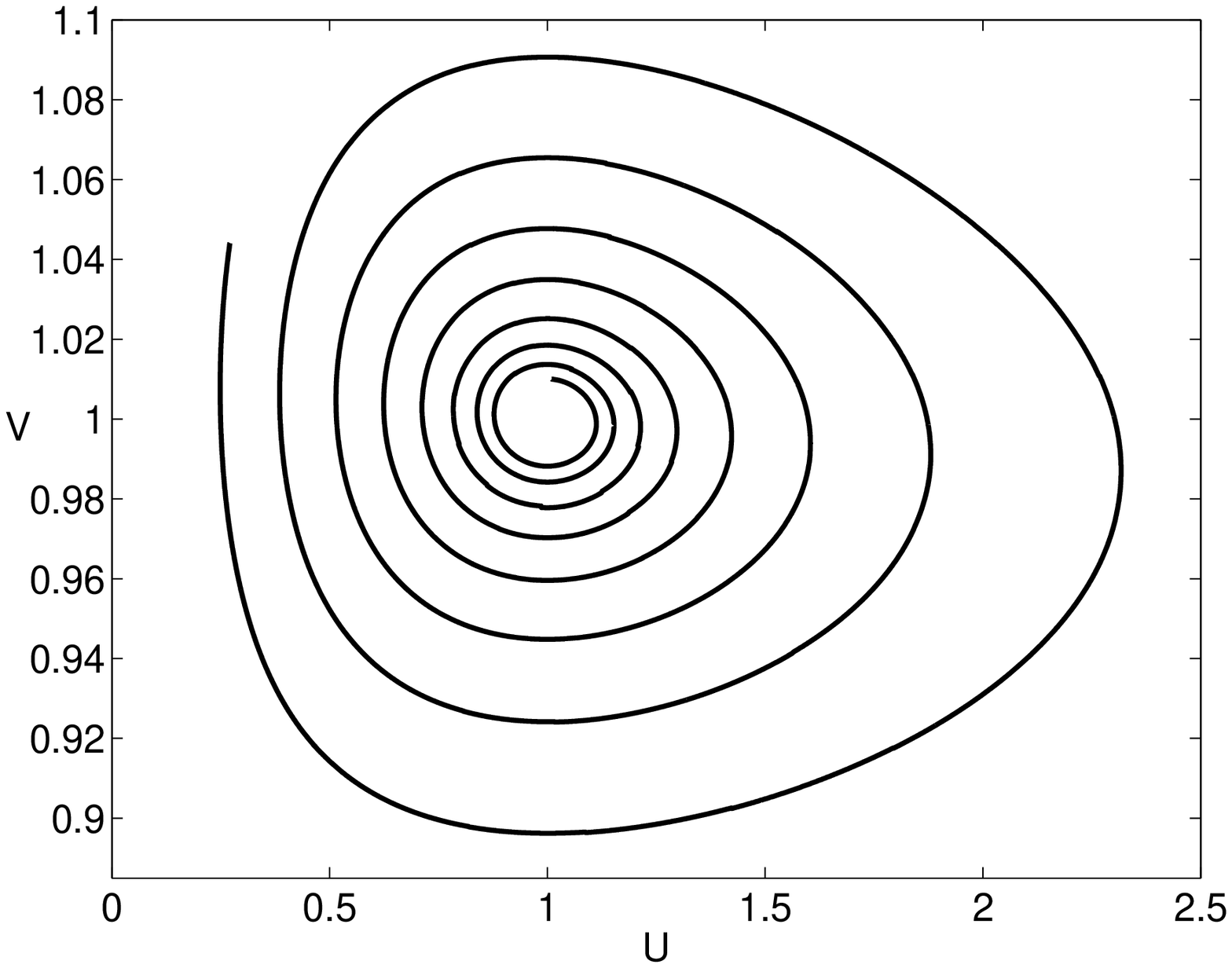}
}%
\caption{Oscillatory behavior of $U$ (left) and phase plane for $U,V$ (right).}
\label{figure1}
\end{figure}

\subsection{The ODE regime\label{ODELolkaVolterra}}

The key idea in computing the asymptotics of the solutions of (\ref{S3E10}%
)-(\ref{S3E13}) is to obtain suitable approximations of the operator $I\left[
H\right]  \left(  X\right)  $ in (\ref{S3E13}) for small $\lambda$. The
formula for $H_{\lambda}$ (cf. (\ref{S2E3})), combined with (\ref{S2E3b}) and
(\ref{S3E13}), suggests the approximation
\begin{equation}
I\left[  H\right]  \left(  X\right)  =\lambda\int_{-\infty}^{X}e^{\left(
Y-X\right)  }H\left(  Y\right)  dY\int_{\log\left(  e^{X}-e^{Y}\right)  }%
^{X}H\left(  Z\right)  dZ\,. \label{E1}%
\end{equation}
We will discuss the consistency of this assumption in Section \ref{S.consistency}.

The operator $I\left[  H\right]  \left(  X\right)  $ can be further approximated 
if $Y/X$ changes significantly faster than $H(Y)/H(X)$, as we saw is the case for
$X \to -\infty$. Then it is natural to approximate $H(Y)$ by $H(X)$ and we obtain
\ignore{
If the characteristic length in which
$\left(  H\left(  Y\right)  {-} 1\right)  $ has relevant variations compared
with itself for $Y$ of order $X$ is much larger than one, it would be natural
to approximate $H\left(  Y\right)  $ by $H\left(  X\right)  .$
}
\[
I\left[  H\right]  \left(  X\right)  \sim\lambda H\left(  X\right)
\int_{-\infty}^{X}e^{\left(  Y-X\right)  }dY\int_{\log\left(  e^{X}%
-e^{Y}\right)  }^{X}H\left(  Z\right)  dZ\ \
\]
and changing the order of the integrals
\begin{equation}
I\left[  H\right]  \left(  X\right)  \sim\lambda H\left(  X\right)
\int_{-\infty}^{X}H\left(  Z\right)  e^{Z-X}dZ\,. \label{S5E1}%
\end{equation}

From (\ref{S3E12}) we have
\[
U\left(  X\right)  =\left(  1{-}\lambda\right)  \int_{-\infty}^{X}H\left(
Z\right)  e^{\left(  1{-}\lambda\right)  \left(  Z-X\right)  }dZ \sim
\int_{-\infty}^{X}H\left(  Z\right)  e^{\left(  Z-X\right)  }dZ
\]
which together with (\ref{S5E1}) leads to
\begin{equation}
I\left[  H\right]    \sim\lambda H  U
 \,.  \label{S5E2}%
\end{equation}
Using (\ref{S5E2}) as well as (\ref{S2E3}) and (\ref{S2E3b})  we obtain the
following approximation of (\ref{S3E10}) up to order $\lambda$:
\begin{equation}
H =U V +\lambda U V \left(  U -1\right)  \label{S5E3a}%
\end{equation}
and plugging this approximation into (\ref{S3E12}), (\ref{S3E13}) we obtain%
\begin{align}
\frac{dU}{dX}  &  =-\left(  1{-}\lambda\right)  U + \left( 1{-}\lambda\right)
U  V +\lambda\left(  1{-}\lambda\right)  U V \left(  U -1\right)\,,
\label{S5E3}\\
\frac{dV}{dX}  &  =\lambda V -\lambda U V -\lambda^{2}U V \left(  U -1\right)\,.
\label{S5E4}%
\end{align}
As (\ref{D1E2}) and (\ref{S4E0})
 imply that the changes of $\left(  V{-}1\right)  $ compared to those of
$\left(  U{-}1\right)  $ are of order $\sqrt{\lambda}$, we make  the
 change of variables
\begin{equation}
V=1+\sqrt{\lambda}\omega\ \ ,\ \ \xi=\sqrt{\lambda}X\,, \label{S5E5}%
\end{equation}
that transform (\ref{S5E3}), (\ref{S5E4}) up to order $\sqrt{\lambda}$ into
\begin{equation}
\frac{dU}{d\xi}=U\omega+\sqrt{\lambda}U\left(  U-1\right)  \   ,\ \ \ \ \frac
{d\omega}{d\xi}=1-U+\sqrt{\lambda}\omega\left(  1-U\right) \,. \label{S5E6}%
\end{equation}

\subsection{Analysis of the ODE (\ref{S5E6})\label{ODE}}

Let us denote by $\left(  U_{\lambda},\omega_{\lambda}\right)  $ any solution
of (\ref{S5E6}). We first notice that the asymptotics of the solutions of
(\ref{S5E6}) as $\left(  U_{\lambda},\omega_{\lambda}\right)  \rightarrow
\left(  1,0\right)  $ agree with those obtained in (\ref{S4E0}). Indeed, in
the limit $\lambda\rightarrow0,$ we can compute the asymptotics of $\left(
U,\omega\right)  $ as $\xi\rightarrow-\infty$ using (\ref{S4E0}), (\ref{D1E2})
and (\ref{S5E5}):
\begin{equation}
\left(  (U_{\lambda}{-}1)\left(  \xi\right)  ,\omega_{\lambda}\left(  \xi\right)
\right)    \sim\operatorname{Re}\left(  K\left(
1,i\right)  e^{\left(  i+O\left(
\sqrt{\lambda}\right)  \right)  \xi}\right)  \ \ \text{as}\ \ \xi\rightarrow
-\infty\,.\label{S6E2}%
\end{equation}
Linearizing (\ref{S5E6}) near $\left(  U,\omega\right)  =\left(  1,0\right)  $
we obtain the same asymptotics for the functions $\left(  U_{\lambda}\left(
\xi\right)  ,\omega_{\lambda}\left(  \xi\right)  \right)  $ as $\xi
\rightarrow-\infty.$ This gives the desired matching between the solutions of
the linearization of (\ref{S3E10})-(\ref{S3E13}) around $\left(  H,U,V\right)
=\left(  1,1,1\right)  $ and the solutions of the approximated problem
(\ref{S5E6}).

The approximation (\ref{S6E2}) is valid as long as $\left\vert \left(
U_{\lambda}-1\left(  \xi\right)  ,\omega_{\lambda}\left(  \xi\right)  \right)
\right\vert $ is small. However, a detailed analysis of the nonlinear problem
(\ref{S5E6}) is needed if \linebreak$\left\vert \left(  U_{\lambda}-1\left(
\xi\right)  ,\omega_{\lambda}\left(  \xi\right)  \right)  \right\vert $
becomes of order one. In order to describe the solutions of (\ref{S5E6}) in
this regime, we notice that this equation is a perturbation of the 
 equation
\begin{equation}
\frac{dU_{0}}{d\xi}=U_{0}\omega_{0}\ ,\ \ \ \ \frac{d\omega_{0}}{d\xi
}=1-U_{0} \label{S6E3}\,.
\end{equation}
This equation can be explicitly integrated, since the following quantity is
conserved along trajectories:
\begin{equation}
E=-\log\left(  U_{0}\right)  +\left(  U_{0}-1\right)  +\frac{\omega_{0}^{2}%
}{2} \,. \label{S6E4}%
\end{equation}

The solutions of (\ref{S6E3}) are periodic, but this behavior is not
compatible with the exponential growth obtained in (\ref{S6E2}). This growth
is due to the increase in $E$ produced by the terms of order $\sqrt{\lambda} $
in (\ref{S5E6}). We now compute this change of energy for values of $\left(
U,\omega\right)  $ of order one. Using (\ref{S5E6}) and (\ref{S6E4}) we
obtain
\begin{equation}
\frac{dE}{d\xi}=\sqrt{\lambda}\left[  \left(  U_{\lambda}-1\right)
^{2}+\omega_{\lambda}^{2}\left(  1-U_{\lambda}\right)  \right]  \label{S6E5}\,.
\end{equation}

Since the solutions of (\ref{S5E6}) are close to those of (\ref{S6E3}) during
finite time intervals, we can  adiabatically compute the change in $E.$ More
precisely, the solutions of (\ref{S6E3}) satisfying (\ref{S6E4}) are periodic
with a period $T\left(  E\right)  $ given by
\begin{equation}
T\left(  E\right)  =\sqrt{2}\int_{U_{-}\left(  E,0\right)  }^{U_{+}\left(
E,0\right)  }\frac{d\eta}{\eta\sqrt{E+\log\left(  \eta\right)  -\left(
\eta-1\right)  }} \label{S6E5a}%
\end{equation}
where the functions $U_{-}\left(  E,\omega\right)  \leq U_{+}\left(
E,\omega\right)  $ are defined as the roots of the equation
\begin{equation}
\log\left(  U\right)  -\left(  U-1\right)  =\frac{\omega^{2}}{2}-E
\label{S6E6}%
\end{equation}
for any given value of $\omega$ and $E\geq\frac{\omega^{2}}{2}.$
Integrating (\ref{S6E5}) and using the adiabatic approximation, we can then
approximate the change of $E$ during an interval of length $T\left(  E\right)
$ as
\begin{equation}
\frac{E\left(  T\left(  E\right)  \right)  -E\left(  0\right)  }{\sqrt
{\lambda}}=\int_{0}^{T\left(  E\right)  }\left[  \left(  U_{0}\left(
\xi\right)  -1\right)  ^{2}+\omega_{0}^{2}\left(  \xi\right)  \left(
1-U_{0}\left(  \xi\right)  \right)  \right]  d\xi=\Phi\left(  E\right)
\label{S6E7}%
\end{equation}
where $\left(  U_{0}\left(  \xi\right)  ,\omega_{0}\left(  \xi\right)
\right)  $ is a solution of (\ref{S6E3}) satisfying (\ref{S6E4}).

It turns out that the function $\Phi\left(  E\right)  $ is positive for any
$E>0.$ This follows, using
the second
equation in (\ref{S6E3}), via
\[
\Phi\left(  E\right)  =\int_{-\sqrt{2E}}^{\sqrt{2E}}\left[  \left(
1-U_{-}\left(  E,\omega\right)  \right)  +\omega^{2}\right]  d\omega
-\int_{-\sqrt{2E}}^{\sqrt{2E}}\left[  \left(  1-U_{+}\left(  E,\omega\right)
\right)  +\omega^{2}\right]  d\omega\,,
\]
whence
\begin{equation}
\Phi\left(  E\right)  =2\int_{0}^{\sqrt{2E}}\left(  U_{+}\left(
E,\omega\right)  -U_{-}\left(  E,\omega\right)  \right)  d\omega>0\,.
\label{S6E8}%
\end{equation}

We can compute the asymptotics of (\ref{S6E8}) as $E\rightarrow\infty$ using
(\ref{S6E6}). Notice that the leading contribution to the integral is given by
$U_{+}\left(  E,\omega\right)  ,$ since $U_{-}\left(  E,\omega\right)  \leq1.$
Then
\begin{equation}
\Phi\left(  E\right)  \sim2\int_{0}^{\sqrt{2E}}\left(  E-\frac{\omega^{2}}%
{2}\right)  d\omega=\frac{4\sqrt{2}}{3}E^{\frac{3}{2}}\ \ \text{as\ \ }%
E\rightarrow\infty\,.\label{S6E8a}%
\end{equation}

\subsection{Range of validity of the ODE regime}

\label{Ss.validity}

 We have obtained that, as long as the approximation
(\ref{S5E6}) is valid, we can approximate the functions $\left(  U_{\lambda
}\left(  \xi\right)  ,\omega_{\lambda}\left(  \xi\right)  \right)  $ by 
 $\left(  U_{0}\left(  \xi\right)  ,\omega_{0}\left(  \xi\right)  \right)  $
while  $E$ is computed via the iterative recursion (\ref{S6E7}). 
Due to (\ref{S6E8a}) these values of $E$  increase an amount
of order $\sqrt{\lambda}\left(  1+E\right)  ^{\frac{3}{2}}$ in each period of
size $T\left(  E\right)  .$ We now proceed to discuss the range of validity of
the different approximations that have been used to derive (\ref{S6E7}) and
 (\ref{S5E6}).

To examine the range
 of values of $E$ for which the adiabatic
approximation is valid, we need to show that the value of $E\left(
\xi\right)  $ remains close to $E\left(  0\right)  $ for $0\leq\xi\leq
T\left(  E\right)  .$ Due to (\ref{S6E6}) and (\ref{S6E8}) the
adiabatic approximation is valid as long as
\begin{equation}
\sup_{0\leq\xi\leq T\left(  E\right)  }\left\vert E\left(  \xi\right)
-E\left(  0\right)  \right\vert \ll E\left(  0\right)\,.  \label{S6E9}%
\end{equation}
Let us remark that (\ref{S6E9}) does not follow immediately from the
inequality $\left\vert E\left(  T\left(  E\left(  0\right)  \right)  \right)
-E\left(  0\right)  \right\vert \ll E\left(  0\right)  $ because the right-hand
side of (\ref{S6E5}) does not have a sign and therefore it is not clear that
\begin{equation}
\sup_{0\leq\xi\leq T\left(  E\right)  }\left\vert E\left(  \xi\right)
-E\left(  0\right)  \right\vert \leq C\left\vert E\left(  T\left(  E\left(
0\right)  \right)  \right)  -E\left(  0\right)  \right\vert \label{S6E10}%
\end{equation}
for some $C>0.$ A priori it is not possible to rule out the possibility of big
changes of $E\left(  \xi\right)  $ along each cycle balanced in such a way
that the overall change of $E$ along the cycle is just $\Phi\left(  E\right)
.$ However, as shown in Appendix \ref{A.energy}, it turns out that
(\ref{S6E10}) holds true.

Using (\ref{S6E7}), (\ref{S6E8a}) and (\ref{S6E10}) it  follows that the
adiabatic approximation is valid as long as $\sqrt{\lambda}\left(  1+E\right)
^{\frac{3}{2}}\ll \left(  1+E\right) $ and this is satisfied as long as
\begin{equation}
\lambda\left(  1+E\right)  \ll 1 \label{S6E11}\,.
\end{equation}

We now study the range of validity of the approximation (\ref{S5E2}) that is
the main ingredient in deriving the approximate problem (\ref{S5E6}). The main
assumption used in the derivation of (\ref{S5E2}) is that the characteristic
length scale for which $\left(  H\left(  X\right)  -1\right)  $ has changes
comparable to itself is much larger than one.
Due to (\ref{S5E3a}) it follows that as long as $V$ remains close to $1$ the
 characteristic length scale for
$\left(  H-1\right)  $ is the same as for $\left(  U-1\right)  .$ Notice that
$\left(  V-1\right)  $ is small if $\sqrt{\lambda}\omega\ll 1.$ 
Since  
we have $\frac 1 2 \omega^2 \leq E$, the condition $\sqrt{\lambda} \omega \ll 1$
holds if (\ref{S6E11}) is true, and so
 the characteristic
length scales for $\left(  H-1\right)  $ and $\left(  U-1\right)  $ are the
same under the assumption (\ref{S6E11}). For $\left\vert \left(
U,\omega\right)  \right\vert $ of order one, the evolution of (\ref{S6E3})
takes place in the length scale $\xi,$ or equivalently for changes in $X$ of
order $\frac{1}{\sqrt{\lambda}}\gg 1.$ Therefore, the condition in (\ref{S6E11})
that allows us to
obtain (\ref{S5E2}) is immediately satisfied and we can restrict our analysis
to the case $\left\vert \left(  U,\omega\right)  \right\vert \gg 1. $ The first
equation in (\ref{S6E3}) combined with (\ref{S5E5}) implies that
\begin{equation}
\left\vert \frac{d\left(  \log\left(  U\right)  \right)  }{dX}\right\vert
=\sqrt{\lambda}\left\vert \omega\right\vert \label{S6E12}\,.
\end{equation}

The left-hand side measures the relative variations in $U$ with respect to
changes in $X.$ The characteristic length scale that describes the changes in
this quantity is large as long as $\sqrt{\lambda}\left\vert \omega\right\vert
\ll 1$ which is again true if  (\ref{S6E11}) holds.

Therefore, if the condition (\ref{S6E11}) is satisfied, we can apply
simultaneously all the approximations that lead to (\ref{S5E6}), the adiabatic
approximation yielding (\ref{S6E7}) as well as the condition $\left\vert
V-1\right\vert \ll 1.$ However, the three assumptions break down simultaneously
if $E$\ becomes of order $\frac{1}{\lambda}$. Since (\ref{S6E7}) and
(\ref{S6E8a}) imply that $E$ increases to arbitrarily large values, the
failure of (\ref{S6E11}) happens for every solution of (\ref{S3E10}%
)-(\ref{S3E13}) satisfying (\ref{S4E0}).

\section{ The intermediate regime}
\label{CombRegion}

In order to describe the solutions of (\ref{S3E10})-(\ref{S3E13}) when
$\lambda E\approx1$  we introduce a new group of variables. It turns out to be
convenient to split the regime into  the one  where $U$ (or
$H$) is of order one or smaller, and the one where $U$ (or $H$) is large.
Notice that (\ref{S6E6}) implies that for a given value of $E\sim\frac
{1}{\lambda}$ the  minimum of $U$ scales as $e^{-1/\lambda}$, and the
maximum scales as $1/\lambda$.  We remark that the
forthcoming analysis will show that as long as $V$ remains of order one and
is not small the
values of $H$ and $U$ have the same order of magnitude. This assumption will
be made implicitly in all the remaining computations and will be justified by the
self-consistency of the derived asymptotics.

\subsection{The integro-differential equation regime}

\label{Ss.integro}

We begin by studying the case in which $U$ (and $H$) are large. Since we are
interested in the case $E\approx\frac{1}{\lambda}$, equation (\ref{S6E6}) suggests
the rescaling $U\approx\frac{1}{\lambda},\ \omega\approx\frac{1}{\sqrt
{\lambda}}.$ Then (\ref{S6E12}) yields a characteristic length scale for $X$
of order one. This suggests  introducing the  new set of variables
\begin{equation}
U\left(  X\right)  =\frac{1}{\lambda}\mathcal{U}\left(  X\right)
  , \ \ \ V\left(  X\right)  =\mathcal{V}\left(  X\right)    ,\ \ \ H\left(
X\right)  =\frac{1}{\lambda}\mathcal{H}\left(  X\right) \,.
\ \label{S7E1}%
\end{equation}
Plugging
(\ref{S7E1}) into (\ref{S3E10})-(\ref{S3E13}) we obtain
\begin{align}
\mathcal{H}  &  =\frac{H_{\lambda}}{\lambda\left(
1{-}\lambda\right)  }\mathcal{U}  \mathcal{V}
+\frac{1}{\lambda}I\left[  \mathcal{H}\right]\,,  \label{S7E4}\\
\frac{d\mathcal{U}}{dX}  &  =-\left(  1{-}\lambda\right)  \mathcal{U}
  +\left(  1{-}\lambda\right)  \mathcal{H}\,,
\, \qquad \frac{d\mathcal{V}}{dX}    =\lambda\mathcal{V}
-\mathcal{H}\,,  \label{S7E6}%
\end{align}
where the operator $I\left[  \mathcal{H}\right]  $ is as in (\ref{S3E13}).
Using the approximation (\ref{E1}) that will be seen to be still valid in this
region we obtain
\[
\frac{1}{\lambda}I\left[  \mathcal{H}\right]  \left(  X\right)  =\left(
1+O\left(  \lambda\right)  \right)  \int_{-\infty}^{X}e^{\left(  Y-X\right)
}\mathcal{H}\left(  Y\right)  dY\int_{\log\left(  e^{X}-e^{Y}\right)  }%
^{X}\mathcal{H}\left(  Z\right)  dZ\,.
\]

Taking the limit $\lambda\rightarrow0$ in (\ref{S7E4})-(\ref{S7E6}) we obtain
\begin{align}
\mathcal{H}\left(  X\right)   &  =\mathcal{U}\left(  X\right)  \mathcal{V}%
\left(  X\right)  +\int_{-\infty}^{X}e^{\left(  Y-X\right)  }\mathcal{H}%
\left(  Y\right)  dY\int_{\log\left(  e^{X}-e^{Y}\right)  }^{X}\mathcal{H}%
\left(  Z\right)\,,  dZ\label{S7E7}\\
\frac{d\mathcal{U}}{dX}  &  =-\mathcal{U}  +\mathcal{H}
\  ,\ \ \ \frac{d\mathcal{V}}{dX}=-\mathcal{H}\,.
\label{S7E9}%
\end{align}

\subsection{The ODE regime}
\label{Ss.ode}

If $\lambda E\approx1$ but $U$ (and $H$)  remains bounded we can
approximate (\ref{S3E10})-(\ref{S3E13}) as follows. 
Keeping just the leading terms in
(\ref{S3E10}), (\ref{S3E13}) we arrive at the approximation $H=UV$.
Plugging this approximation
into (\ref{S3E12}) we obtain
\begin{equation}
\frac{dU}{dX}=-U  +U  V
\ \ ,\ \ \ \ \frac{dV}{dX}=\lambda\left(  V  -U
 V \right) \,. \label{S7E10}%
\end{equation}

\subsection{Explicit solution of (\ref{S7E7}), (\ref{S7E9})}
\label{Ss.intanalysis}

We are going to compute the solutions of (\ref{S7E7}), (\ref{S7E9})  explicitly using
Laplace transforms. In order to bring (\ref{S7E7}), (\ref{S7E9}) into the form
of a convolution equation we introduce the set of variables
\begin{equation}
x=e^{X}\  ,\ \ \mathcal{H}\left(  X\right)  =\bar{h}\left(  x\right)
\  ,\ \ \ \mathcal{U}\left(  X\right)  =\bar{u}\left(  x\right)
\  ,\ \ \mathcal{V}\left(  X\right)  =\bar{v}\left(  x\right)  \label{S8E0}%
\end{equation}
that transforms (\ref{S7E7}), (\ref{S7E9}) into
\begin{align}
\bar{h}\left(  x\right)   &  =\bar{u}\left(  x\right)  \bar{v}\left(
x\right)  +\frac{1}{x}\int_{0}^{x}\bar{h}\left(  y\right)  dy\int_{x-y}%
^{x}\frac{\bar{h}\left(  z\right)  }{z}dz\,,\label{S8E1}\\
x\bar{u}_{x}  &  =-\bar{u}+\bar{h}\ \ \ ,\ \ \ x\bar{v}_{x}=-\bar
{h}\,.
\label{S8E2}%
\end{align}

Notice that this system of equations can be obtained from (\ref{S3E4}),
(\ref{S3E7}) by means of the rescaling $h=\frac{\bar{h}}{\lambda}%
,\ u=\frac{\bar{u}}{\lambda},\ v=\bar{v}$ and taking the limit $\lambda
\rightarrow0^{+}.$ We can compute explicitly a family of solutions of
(\ref{S8E1}), (\ref{S8E2}) that will be used to describe the solutions of
(\ref{S3E10})-(\ref{S3E13}) in the limit $\lambda\rightarrow0^{+}.$

\begin{theorem}
\label{solInt}

For any given $a \in (0,2)$ and $\kappa>0$ there exists a solution
to (\ref{S8E1}), (\ref{S8E2}) that satifies
\begin{equation}
\bar h(0)=\bar h(\infty)= \bar u(0)= \bar u(\infty)=0\,, \ \
\lim_{x\rightarrow0^{+}}\bar{v}\left(  x\right)  =1+a  , \ \ \lim
_{x\rightarrow\infty}\bar{v}\left(  x\right)  =1-a\,. \label{F7E7}%
\end{equation}
It is given by
\begin{equation}
\bar{v}\left(  x\right)  =\frac{1}{2\pi i}\int_{\gamma}\left[  1+a\left(
\frac{\zeta^{a}-\kappa x^{a}}{\zeta^{a}+\kappa x^{a}}\right)  \right]
\frac{e^{\zeta}}{\zeta}d\zeta\label{F7E5}%
\end{equation}
where $\gamma=\left\{  \gamma\left(  s\right)  :s\in\mathbb{R}\right\}  $ is a
contour in the complex plane contained in the domain $\left\{  \arg\left(
\zeta\right)  \in\left(  -\theta_{0},\theta_{0}\right)  \right\}  $ satisfying
$\arg\left(  \gamma\left(  s\right)  \right)  \rightarrow-\theta_{0}$ as
$s\rightarrow-\infty$ and $\arg\left(  \gamma\left(  s\right)  \right)
\rightarrow\theta_{0}$ as $s\rightarrow\infty$ with $\frac{\pi}{2}<\theta
_{0}<\min\left\{  \frac{\pi}{a},\pi\right\}  .$ 

If $0<a<1$ we have $\bar{v}_{x}\left(  x\right)  <0,\ \bar{h}\left(  x\right)
>0 $ and $\ \bar{u}\left(  x\right)  >0$ for any $x\in\mathbb{R}^{+}.$

The following asymptotics hold:
\begin{align}
\bar{v}_{x}\left(  x\right)   &  \sim-\frac{2a\kappa}{\Gamma\left(  a\right)
}\frac{1}{x^{1-a}}\ \ \ \ \text{as\ \ }x\rightarrow0^{+}\ \ ,\ \ a\in\left(
0,2\right) \label{G1E1}\\
\bar{v}_{x}\left(  x\right)   &  \sim\frac{2a}{\kappa\Gamma\left(  -a\right)
}\frac{1}{x^{a+1}}\ \ \ \ \text{as\ \ }x\rightarrow\infty\ \ \ ,\ \ a\in
\left(  0,1\right)  \cup\left(  1,2\right) \nonumber
\end{align}%
\begin{align}
\bar{h}\left(  x\right)   &  \sim\frac{2a\kappa}{\Gamma\left(  a\right)
}x^{a}\ \ \ \ \text{as\ \ }x\rightarrow0^{+}\ \ \ \ ,\ \ a\in\left(
0,2\right) \label{G1E2}\\
\bar{h}\left(  x\right)   &  \sim-\frac{2a}{\kappa\Gamma\left(  -a\right)
}\frac{1}{x^{a}}\ \ \ \ \text{as\ \ }x\rightarrow\infty\ \ \ ,\ \ a\in\left(
0,1\right)  \cup\left(  1,2\right) \nonumber
\end{align}%
\begin{align}
\bar{u}\left(  x\right)   &  \sim\frac{2a\kappa}{\Gamma\left(  a\right)
\left(  a+1\right)  }x^{a}\ \ \ \ \text{as\ \ }x\rightarrow0^{+}%
\ \ \ \ ,\ \ a\in\left(  0,2\right) \label{G1E3}\\
\bar{u}\left(  x\right)   &  \sim-\frac{2a}{\kappa\Gamma\left(  -a\right)
\left(  1-a\right)  }\frac{1}{x^{a}}\ \ \ \ \text{as\ \ }x\rightarrow
\infty\ \ \ ,\ \ a\in\left(  0,1\right) \nonumber
\end{align}

If $a=1:$%
\begin{equation}
\bar{v}\left(  x\right)  =2e^{-\kappa x}\ \ ,\ \ \bar{h}\left(  x\right)
=2\left(  \kappa x\right)  e^{-\kappa x}\ \ ,\ \ \bar{u}\left(  x\right)
=\frac{2}{\kappa x}\left(  1-\left(  1+\kappa x\right)  e^{-\kappa x}\right)
\label{G1E4}%
\end{equation}

\end{theorem}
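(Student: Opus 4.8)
The plan is to solve the coupled system (\ref{S8E1}), (\ref{S8E2}) by passing to Laplace transforms and reducing it to an ordinary differential equation for the transform of $\bar v$. First I would reformulate the convolution term: writing $\bar h$ in place, the double integral $\int_0^x \bar h(y)\,dy\int_{x-y}^x \bar h(z)z^{-1}\,dz$ is, after changing the order of integration, a genuine convolution $(\bar h * \tilde h)(x)$ with $\tilde h(x)=\bar h(x)/x \cdot (\text{something})$; more precisely one rewrites it so that multiplication by $x$ turns it into $x\bar h = x\bar u\bar v + \bar h * (\text{primitive of }\bar h)$-type expression. The second equation $x\bar v_x = -\bar h$ lets me eliminate $\bar h = -x\bar v_x$ everywhere, and the first equation $x\bar u_x = -\bar u + \bar h$ together with $\bar u(0)=0$ gives $\bar u(x) = -\frac1x\int_0^x y\bar v_y(y)\,dy = -\bar v(x) + \frac1x\int_0^x \bar v(y)\,dy$ after integration by parts (using $\bar v(0^+)=1+a$). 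So everything is expressed through $\bar v$ alone, and (\ref{S8E1}) becomes a single closed integral equation for $\bar v$.

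The next step is to apply the Laplace transform in $x$. Because of the scaling structure (the ansatz (\ref{F7E5}) involves $\kappa x^a$), I expect the transformed equation to be a first-order linear ODE in the transform variable $\zeta$, whose solution produces precisely the bracket $1 + a\frac{\zeta^a - \kappa x^a}{\zeta^a + \kappa x^a}$ — note that after the substitution this should really read as a function of $\zeta$ alone once the self-similar variable is handled, so I would first rescale to remove $\kappa$ and $a$, identify the ODE, solve it, and check the solution is the Laplace/Mellin–Barnes representation written as a contour integral over $\gamma$. The contour $\gamma$ with $\arg\gamma(s)\to\pm\theta_0$, $\frac\pi2<\theta_0<\min\{\pi/a,\pi\}$, is dictated by the requirement that $e^\zeta/\zeta$ decays at the ends of the contour while the branch points of $\zeta^a$ (at $\zeta=0$ and, for the denominator $\zeta^a+\kappa x^a$, on the ray $\arg\zeta = \pi/a$) are avoided; I would verify convergence of (\ref{F7E5}) and that it is a genuine solution by substituting back, using that $\frac{1}{2\pi i}\int_\gamma \zeta^{-1}e^\zeta\,d\zeta = 1$ and that the $x$-derivatives commute with the contour integral.

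Once the formula (\ref{F7E5}) is established, the remaining assertions are extracted from it. The boundary values $\lim_{x\to0^+}\bar v = 1+a$ and $\lim_{x\to\infty}\bar v = 1-a$ follow by letting $x\to0$ (so $\kappa x^a\to0$, bracket $\to 1+a$) and $x\to\infty$ (so the ratio $\to -1$, bracket $\to 1-a$), in each case justified by dominated convergence on $\gamma$ plus $\frac{1}{2\pi i}\int_\gamma \zeta^{-1}e^\zeta d\zeta=1$. The small- and large-$x$ asymptotics (\ref{G1E1})–(\ref{G1E3}) come from expanding the bracket: $\frac{\zeta^a-\kappa x^a}{\zeta^a+\kappa x^a} = 1 - 2\kappa x^a \zeta^{-a} + O(x^{2a})$ as $x\to0$, and the reciprocal expansion as $x\to\infty$, then using the standard contour identity $\frac{1}{2\pi i}\int_\gamma \zeta^{-a-1}e^\zeta\,d\zeta = \frac{1}{\Gamma(a+1)}$ (Hankel's formula) to evaluate $\bar v_x$, after which $\bar h = -x\bar v_x$ and $\bar u = -\bar v + \frac1x\int_0^x\bar v$ give (\ref{G1E2}), (\ref{G1E3}). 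The positivity statements for $0<a<1$ — $\bar v_x<0$, $\bar h>0$, $\bar u>0$ — I would get either from the sign of $-\frac{2a\kappa}{\Gamma(a)}x^{a-1}$ near zero combined with a monotonicity/maximum-principle argument on the integro-differential system, or more cleanly by deforming $\gamma$ to a Hankel contour hugging the negative real axis and writing $\bar v_x$ as a manifestly negative real integral $\int_0^\infty (\cdots)\,ds$ with positive integrand; this sign analysis is the one place where the contour representation must be handled with care. Finally the $a=1$ case (\ref{G1E4}) is a direct residue computation: the bracket becomes $1 + \frac{\zeta-\kappa x}{\zeta+\kappa x} = \frac{2\zeta}{\zeta+\kappa x}$, so $\bar v(x) = \frac{1}{2\pi i}\int_\gamma \frac{2e^\zeta}{\zeta+\kappa x}\,d\zeta = 2e^{-\kappa x}$ by residues, and then $\bar h = -x\bar v_x = 2\kappa x e^{-\kappa x}$ and $\bar u$ follows from the formula for $\bar u$ in terms of $\bar v$. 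I expect the main obstacle to be the careful justification that the Mellin–Barnes contour integral (\ref{F7E5}) actually solves the nonlinear-looking convolution equation (\ref{S8E1}) — i.e. that the quadratic convolution term transforms correctly and closes into the linear ODE — together with making all the contour deformations rigorous near the branch point $\arg\zeta=\pi/a$.
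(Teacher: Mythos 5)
Your proposal follows the paper's proof essentially verbatim: eliminate $\bar h$ and $\bar u$ in favour of $\bar v$ to reduce (\ref{S8E1})--(\ref{S8E2}) to the single convolution equation $x^{2}\bar v_{x}(x)=\int_{0}^{x}y\bar v_{y}(y)\bar v(x-y)\,dy$, take the Laplace transform $w(\zeta)=\int_{0}^{\infty}\bar v(x)e^{-\zeta x}dx$, solve the resulting ODE, invert along $\gamma$, and then extract the boundary values, asymptotics, positivity (via deformation to a Hankel contour), and the $a=1$ residue computation. One small inaccuracy worth correcting: since the convolution is quadratic in $\bar v$, the transformed equation is \emph{nonlinear} — it is $\frac{d^{2}}{d\zeta^{2}}(\zeta w)+w\,\frac{d}{d\zeta}(\zeta w)=0$, which integrates once to a Riccati-type first-order equation, not a linear one — and this nonlinearity is precisely what produces the rational function of $\zeta^{a}$ in the bracket of (\ref{F7E5}).
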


\begin{remark}
The solutions with different values of $\kappa$ are essentially the same up to
rescaling. Notice also that (\ref{F7E7}) implies that $\bar{v}\left(
x\right)  $ becomes negative for $x$ sufficiently large if $a>1.$
\end{remark}

\begin{remark}
Some comments about the choice of the range of values of $a$ are in order. We
need the restriction $0<a<2$ to avoid the singular points $\left(
-\kappa\right)  ^{\frac{1}{a}}$ crossing the imaginary axis. It would also be
possible to include the value $a=1$ in the second formula of (\ref{G1E1}) and
(\ref{G1E2}) since the right-hand side vanishes. However this
would result in asymptotic formulas like $f\sim0 $ that do not have a precise
meaning. Finally the constraint $a<1$ in the second formula of (\ref{G1E3}) is
strictly needed, since the asymptotics of $\bar{u}\left(  x\right)  $ for
$a>1$ is proportional to $\frac{1}{x}.$
\end{remark}

\begin{remark}
It is interesting to note that  the solutions described in Theorem
\ref{solInt} for $0<a<1 $ are just the self-similar solutions with
algebraic decay that have been obtained for the coagulation equation with
constant kernel in \cite{MP1}.  This correspondence can be seen as
follows. The solutions we obtained in Theorem \ref{solInt} satisfy
\[
\bar{v}\left(  x\right)  =\left(  1-a\right)  +\int_{x}^{\infty}\bar{g}\left(
z\right)  dz\ \ ,\ \ \bar{u}\left(  x\right)  =\frac{1}{x}\int_{0}^{x}y\bar
{g}\left(  y\right)  dy
\]
with $\bar{g}\left(  x\right)  =\frac{\bar{h}\left(  x\right)  }{x}.$ Using
this formula to eliminate $\bar{u},\ \bar{v}$ from (\ref{S8E1}) we obtain
\begin{equation}
x^{2}\bar{g}\left(  x\right)  =\left(  1-a\right)  \int_{0}^{x}y\bar{g}\left(
y\right)  dy+\int_{0}^{x}y\bar{g}\left(  y\right)  dy\int_{x-y}^{\infty}%
\bar{g}\left(  z\right)  dz \,.\label{A1}%
\end{equation}

Differentiating (\ref{A1}) we obtain the equation that is satisfied by the
self-similar solutions of
\begin{equation}
\label{A2}\partial_{\tau}F=Q\left[  F\right]  -\frac{\left(  1-a\right)
}{\tau}F
\end{equation}
having the form $f\left(  \xi,\tau\right)  =\frac{1}{\tau^{2}}\bar{g}\left(
\frac{\xi}{\tau}\right)  $ with $Q\left[  \cdot\right]  $ as in (\ref{S1E1}%
)$.$ Equation (\ref{A2}) can be transformed into (\ref{S1E1}) using the change
of variables 
$F\left(  \xi,\tau\right)  =\left(  \tau\right)  ^{-\left(
1-a\right)  }f\left(  \xi,t\right) $ and $t=\frac{\tau^{a}}{a}$ and
correspondingly the solutions obtained in Theorem \ref{solInt} are transformed
into those obtained in \cite{MP1} if $0<a\leq1.$
\end{remark}

\begin{proof}
Integrating the first equation in (\ref{S8E2}) we arrive at
\begin{equation}
\bar{u}\left(  x\right)  =\frac{1}{x}\int_{0}^{x}\bar{h}\left(  z\right)  dz\,.
\label{S8E2a}%
\end{equation}
Using the second equation in (\ref{S8E2}) we obtain
\begin{align}
\int_{x-y}^{x}\frac{\bar{h}\left(  z\right)  }{z}dz  &  =-\bar{v}\left(
x\right)  +\bar{v}\left(  x-y\right)\,, \label{S8E3}\\
\int_{0}^{x}\bar{h}\left(  z\right)  dz  &  =-\int_{0}^{x}z\bar{v}_{z}\left(
z\right)  =-x\bar{v}\left(  x\right)  +\int_{0}^{x}\bar{v}\left(  z\right)  dz\,,
\label{S8E4}%
\end{align}
assuming that all the integrals involved are convergent.  Therefore, using
(\ref{S8E2a}) and (\ref{S8E4})
we find
\begin{equation}
\bar{u}\left(  x\right)  =-\bar{v}\left(  x\right)  +\frac{1}{x}\int_{0}%
^{x}\bar{v}\left(  z\right)  dz \,.\label{S8E5}%
\end{equation}

Eliminating $\bar{h}$ and $\bar{u}$ from (\ref{S8E1}), the second equation in
(\ref{S8E2}), as well as (\ref{S8E2a}), (\ref{S8E3}), (\ref{S8E4}),
implies
\[%
\begin{split}
-x\bar{v}_{x}\left(  x\right)   &  =-\left(  \bar{v}\left(  x\right)  \right)
^{2}+\frac{\bar{v}\left(  x\right)  }{x}\int_{0}^{x}\bar{v}\left(  z\right)
dz\\
&  \quad-\frac{1}{x}\int_{0}^{x}y\bar{v}_{y}\left(  y\right)  \bar{v}\left(
x-y\right)  dy+\frac{\bar{v}\left(  x\right)  }{x}\int_{0}^{x}y\bar{v}%
_{y}\left(  y\right)  dy.
\end{split}
\]

Integrating by parts in the last equation we obtain
\begin{equation}
x^{2}\bar{v}_{x}\left(  x\right)  =\int_{0}^{x}y\bar{v}_{y}\left(  y\right)
\bar{v}\left(  x-y\right)  dy \,.\label{S8E6}%
\end{equation}

Equation (\ref{S8E6}) is a convolution equation. In order to solve it we
introduce the Laplace transform of $\bar{v}\left(  x\right)  $
\begin{equation}
w\left(  \zeta\right)  =\int_{0}^{\infty}\bar{v}\left(  x\right)  e^{-\zeta
x}dx\,. \label{S8E7}%
\end{equation}
Then (\ref{S8E6}) becomes
$\frac{d^{2}}{d\zeta^{2}}\big( \zeta w \big)   +w
\frac{d}{d\zeta}\big( \zeta w \big)  =0$. 
This equation can be integrated explicitly. Its only solution that does not
have singularities and takes real values along the line $\zeta\in
\mathbb{R}^{+}$ is
\begin{equation}
w\left(  \zeta\right)  =\frac{1}{\zeta}\left[  1+a\left(  \frac{\zeta
^{a}-\kappa}{\zeta^{a}+\kappa}\right)  \right] \,. \label{S8E8}%
\end{equation}
Inverting the Laplace transform we obtain%
\begin{equation}
\bar{v}\left(  x\right)  =\frac{1}{2\pi i}\int_{\gamma}\left[  1+a\left(
\frac{\zeta^{a}-\kappa}{\zeta^{a}+\kappa}\right)  \right]  \frac{e^{\zeta x}%
}{\zeta}d\zeta\,. \label{S8E9}%
\end{equation}

The change of variables $\zeta x=\hat{\zeta}$ transforms this formula into
(\ref{F7E5}). The choice of the contour of integration ensures the exponential
convergence of the integral as well as the absence of singularities of the
integrand along the contour of integration. Taking the limits $x\rightarrow
0^{+}$ and $x\rightarrow\infty$ we obtain (\ref{F7E7}).

The negativity of $\bar{v}_{x}$ for $0<a<1$ can be proved by differentiating
(\ref{S8E9}) and deforming the contour of integration to a double half-line
following $\mathbb{R}^{-}$ in a positive and negative direction with
$\arg\left(  \zeta\right)  =-\pi$ and $\arg\left(  \zeta\right)  =\pi$
respectively. Then
\[
\bar{v}_{x}\left(  x\right)  =-\frac{2a\kappa\sin\left(  a\pi\right)  }{\pi
}\int_{0}^{\infty}\frac{e^{-rx}r^{a}dr}{\left\vert \kappa+e^{a\pi i}%
r^{a}\right\vert ^{2}}\ \ ,\ \ 0<a<1\,.
\]

The asymptotics in (\ref{G1E1}) can be obtained rewriting (\ref{S8E9}) as
\[
\bar{v}\left(  x\right)  =\left(  1+a\right)  -\frac{\kappa a}{\pi i}%
\int_{\gamma}\left(  \frac{1}{\zeta^{a}+\kappa}\right)  \frac{e^{\zeta x}%
}{\zeta}d\zeta
\]
and%
\[
\bar{v}\left(  x\right)  =\left(  1-a\right)  +\frac{a}{\pi i}\int_{\gamma
}\left(  \frac{\zeta^{a}}{\zeta^{a}+\kappa}\right)  \frac{e^{\zeta x}}{\zeta
}d\zeta\,.
\]
Differentiating these formulas we obtain
\[
\bar{v}_{x}\left(  x\right)  =-\frac{\kappa a}{\pi i}\int_{\gamma}%
\frac{e^{\zeta x}}{\zeta^{a}+\kappa}d\zeta\ \ ,\ \ \bar{v}_{x}\left(
x\right)  =\frac{a}{\pi i}\int_{\gamma}\left(  \frac{\zeta^{a}}{\zeta
^{a}+\kappa}\right)  e^{\zeta x}d\zeta
\]
and using the change of variables $\zeta x=\hat{\zeta}$ in both integrals it 
follows that
\[
\bar{v}_{x}\left(  x\right)  =-\frac{\kappa a}{\pi i}\frac{1}{x^{1-a}}%
\int_{\gamma}\frac{e^{\zeta}}{\zeta^{a}+\kappa x^{a}}d\zeta\ \ ,\ \ \bar
{v}_{x}\left(  x\right)  =\frac{a}{\pi i}\frac{1}{x}\int_{\gamma}\left(
\frac{\zeta^{a}}{\zeta^{a}+\kappa x^{a}}\right)  e^{\zeta}d\zeta\,.
\]

Taking the limit $x\rightarrow0^{+}$ in the first formula and $x\rightarrow
\infty$ in the second and using the fact that
\[
\frac{1}{\pi i}\int_{\gamma}\frac{e^{\zeta}}{\zeta^{a}}d\zeta=\frac{2}%
{\Gamma\left(  a\right)  }\ \ \ \ \ ,\ \ \ \ \frac{1}{\pi i}\int_{\gamma}%
\zeta^{a}e^{\zeta}d\zeta=\frac{2}{\Gamma\left(  -a\right)  }
\]
we obtain (\ref{G1E1}). Using then (\ref{S8E2}) we obtain (\ref{G1E2}),
(\ref{G1E3}).

Formula (\ref{G1E4}) follows by integrating by residues.
\end{proof}

We can reformulate the results in Theorem \ref{solInt} in terms of the
original functions $\left(  \mathcal{H}\left(  X\right)  ,\mathcal{U}\left(
X\right)  ,\mathcal{V}\left(  X\right)  \right)  .$ In the following we choose $\kappa=1$
as a suitable normalization.

\begin{theorem}
\label{ThIntAs}For any $0<a<2$ there exists a solution of (\ref{S7E7}),
(\ref{S7E9}) satisfying $\mathcal{H}(-\infty)=\mathcal{H}(\infty)=
\mathcal{U}(-\infty)=\mathcal{U}(\infty)=0$. It is given by
\begin{equation}
\mathcal{V}_{a}\left(  X\right)  -1=\frac{a}{2\pi i}\int_{\gamma}\left(
\frac{\zeta^{a}-e^{aX}}{\zeta^{a}+e^{aX}}\right)  \frac{e^{\zeta}}{\zeta
}d\zeta\label{T1E1}%
\end{equation}%
\begin{equation}
\mathcal{H}_{a}\left(  X\right)  =-\frac{d\mathcal{V}_{a}\left(  X\right)
}{dX}\ \ ,\ \ \mathcal{U}_{a}\left(  X\right)  =-\int_{-\infty}^{X}e^{\left(
Z-X\right)  }\frac{d\mathcal{V}_{a}\left(  Z\right)  }{dZ}dZ \label{T1E2}%
\end{equation}%
\begin{equation}
\lim_{X\rightarrow-\infty}\mathcal{V}_{a}\left(  X\right)
=1+a\ \ \ ,\ \ \ \lim_{X\rightarrow\infty}\mathcal{V}_{a}\left(  X\right)
=1-a\,. \label{T1E3}%
\end{equation}

If $0<a<1$ we have $\frac{d\mathcal{V}_{a}\left(  X\right)  }{dX}%
<0,\ \mathcal{H}_{a}\left(  X\right)  >0,\ \mathcal{U}_{a}\left(  X\right)
>0$ for any $X\in\mathbb{R}$. We have the asymptotics
\begin{align}
\frac{d\mathcal{V}_{a}\left(  X\right)  }{dX}  &  \sim-\frac{2a}{\Gamma\left(
a\right)  }e^{aX}\ \ \text{as\ \ }X\rightarrow-\infty,\ \ a\in\left(
0,2\right) \label{T1E4}\\
\frac{d\mathcal{V}_{a}\left(  X\right)  }{dX}  &  \sim\frac{2a}{\Gamma\left(
-a\right)  }e^{-aX}\ \ \text{as\ \ }X\rightarrow\infty\ \ \ ,\ \ a\in\left(
0,1\right)  \cup\left(  1,2\right) \nonumber
\end{align}%
\begin{align}
\mathcal{H}_{a}\left(  X\right)   &  \sim\frac{2a}{\Gamma\left(  a\right)
}e^{aX}\ \ \ \ \text{as\ \ }X\rightarrow-\infty\ \ \ \ ,\ \ a\in\left(
0,2\right) \label{T1E5}\\
\mathcal{H}_{a}\left(  X\right)   &  \sim-\frac{2a}{\Gamma\left(  -a\right)
}e^{-aX}\ \ \ \ \text{as\ \ }X\rightarrow\infty\ \ \ ,\ \ a\in\left(
0,1\right)  \cup\left(  1,2\right) \nonumber
\end{align}%
\begin{align}
\mathcal{U}_{a}\left(  X\right)   &  \sim\frac{2a}{\Gamma\left(  a\right)
\left(  a+1\right)  }e^{aX}\ \ \ \ \text{as\ \ }X\rightarrow-\infty
\ \ \ \ ,\ \ a\in\left(  0,2\right) \label{T1E6}\\
\mathcal{U}_{a}\left(  X\right)   &  \sim-\frac{2a}{\Gamma\left(  -a\right)
\left(  1-a\right)  }e^{-aX}\ \ \ \ \text{as\ \ }X\rightarrow\infty
\ \ \ ,\ \ a\in\left(  0,1\right) \nonumber
\end{align}

If $a=1:$%
\begin{equation}
\mathcal{V}_{a}\left(  X\right)  =2e^{-e^{X}} ,\, \mathcal{H}_{a}\left(
X\right)  =2 e^{X} e^{-e^{X}} ,\, \mathcal{U}_{a}\left(  X\right)
=2e^{-X}\left(  1-\left(  1+\kappa e^{X}\right)  e^{-e^{X}}\right)
\label{T1E7}%
\end{equation}

\end{theorem}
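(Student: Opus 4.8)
The plan is to obtain Theorem~\ref{ThIntAs} as a direct restatement of Theorem~\ref{solInt} under the exponential change of variables (\ref{S8E0}), with the normalization $\kappa=1$. Recall that (\ref{S7E7}), (\ref{S7E9}) were transformed into (\ref{S8E1}), (\ref{S8E2}) precisely by setting $x=e^{X}$, $\mathcal{H}(X)=\bar h(e^{X})$, $\mathcal{U}(X)=\bar u(e^{X})$, $\mathcal{V}(X)=\bar v(e^{X})$; hence, conversely, given the solution $(\bar h,\bar u,\bar v)$ of (\ref{S8E1}), (\ref{S8E2}) produced by Theorem~\ref{solInt} with $\kappa=1$, the functions $\mathcal{H}_{a}(X):=\bar h(e^{X})$, $\mathcal{U}_{a}(X):=\bar u(e^{X})$, $\mathcal{V}_{a}(X):=\bar v(e^{X})$ solve (\ref{S7E7}), (\ref{S7E9}). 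The boundary conditions $\mathcal{H}(\pm\infty)=\mathcal{U}(\pm\infty)=0$ are then the first half of (\ref{F7E7}) read through $x=e^{X}$, and (\ref{T1E3}) is the second half.

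To get the explicit formula (\ref{T1E1}) I would substitute $\kappa=1$ and $x^{a}=e^{aX}$ in (\ref{F7E5}) and split off the constant term using $\frac{1}{2\pi i}\int_{\gamma}\frac{e^{\zeta}}{\zeta}\,d\zeta=1$. This identity holds because $e^{\zeta}/\zeta$ is holomorphic away from the simple pole at the origin, $\gamma$ winds once counterclockwise around that pole (its argument sweeps from $-\theta_{0}$ to $\theta_{0}$ through $0$), and the integral converges since $\operatorname{Re}\zeta\to-\infty$ at both ends of $\gamma$ because $\theta_{0}>\pi/2$. The remaining representation for $\mathcal{V}_{a}-1$ is exactly (\ref{T1E1}). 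The relations (\ref{T1E2}) then come from rewriting (\ref{S8E2}) in the variable $X$: the equation $x\bar v_{x}=-\bar h$ reads $d\mathcal{V}_{a}/dX=-\mathcal{H}_{a}$, and $x\bar u_{x}=-\bar u+\bar h$ reads $d\mathcal{U}_{a}/dX=-\mathcal{U}_{a}+\mathcal{H}_{a}$, which integrates with $\mathcal{U}_{a}(-\infty)=0$ to the second formula in (\ref{T1E2}).

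It remains to transcribe the sign properties, the asymptotics and the case $a=1$. The sign statements for $0<a<1$ (that $d\mathcal{V}_{a}/dX<0$, $\mathcal{H}_{a}>0$, $\mathcal{U}_{a}>0$) are those of Theorem~\ref{solInt} composed with $x=e^{X}>0$. For the asymptotics I would use the chain rule $d\mathcal{V}_{a}/dX(X)=e^{X}\bar v_{x}(e^{X})$ together with (\ref{G1E1}): with $\kappa=1$ this yields $d\mathcal{V}_{a}/dX\sim-\frac{2a}{\Gamma(a)}e^{aX}$ as $X\to-\infty$ and $\sim\frac{2a}{\Gamma(-a)}e^{-aX}$ as $X\to\infty$, which is (\ref{T1E4}); likewise $\mathcal{H}_{a}(X)=\bar h(e^{X})$ and $\mathcal{U}_{a}(X)=\bar u(e^{X})$ turn (\ref{G1E2}), (\ref{G1E3}) into (\ref{T1E5}), (\ref{T1E6}). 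Finally (\ref{T1E7}) is (\ref{G1E4}) evaluated at $\kappa=1$, $x=e^{X}$.

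There is no genuine obstacle here: the statement is a change-of-variables bookkeeping exercise once Theorem~\ref{solInt} is in hand. The only two points that deserve a line of justification are the residue evaluation $\frac{1}{2\pi i}\int_{\gamma}\frac{e^{\zeta}}{\zeta}\,d\zeta=1$, so that the constant in (\ref{F7E5}) is correctly absorbed into the ``$1$'' of $\mathcal{V}_{a}-1$, and keeping the chain-rule factor $e^{X}$ straight when passing from the $x$-asymptotics of $\bar v_{x}$ in (\ref{G1E1}) to the $X$-asymptotics of $d\mathcal{V}_{a}/dX$ in (\ref{T1E4}); both are routine.
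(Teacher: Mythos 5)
Your proposal is correct and takes exactly the same route as the paper: the text immediately preceding Theorem~\ref{ThIntAs} states that it is merely Theorem~\ref{solInt} reformulated through the change of variables (\ref{S8E0}) with the normalization $\kappa=1$, and supplies no further argument, so your bookkeeping (the Hankel-type residue $\frac{1}{2\pi i}\int_{\gamma}e^{\zeta}\zeta^{-1}\,d\zeta=1$, the chain-rule factor $e^{X}$ in passing from $\bar v_{x}$ to $d\mathcal{V}_{a}/dX$, and the integration of $x\bar u_{x}=-\bar u+\bar h$) is precisely what is implicitly invoked. The only small discrepancy is that the paper's equation (\ref{T1E7}) retains a stray $\kappa$ inside the parenthesis for $\mathcal{U}_{a}$, which is evidently a typo since $\kappa=1$ has been fixed; your transcription correctly sets it to $1$.
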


\subsection{Solution of (\ref{S7E10})}
\label{Ss.odeanalysis}

Equation (\ref{S7E10}) that approximates the behaviour of the solutions of
(\ref{S3E10})-(\ref{S3E13}) if $\lambda E\approx1$ and $U$ is of order one,
can be solved explicitly since
\begin{equation}
\hat{E}=\lambda\left(  \log\left(  U\right)  -U\right)  -V+\log\left(
V\right)  \label{S9E1}%
\end{equation}
is preserved along trajectories.
The solutions of (\ref{S7E10}) will be used to match the solutions of
(\ref{S7E7}), (\ref{S7E9}) obtained in the previous Section for $1\ll U\ll \frac
{1}{\lambda}.$ For this range of values $V$ is almost constant if
$\lambda\rightarrow0.$ Moreover, since we are away from the ODE regime
described in Section \ref{ODE} we can assume that $\lambda E$ in (\ref{S6E6})
is of order one and therefore $\hat{E}$ is of order one,
 whence also $\left\vert V-1\right\vert $ is of order one.

Suppose that we consider a matching region (to be made precise later) where 
$\hat{E}$ is of order one,  $V$ is almost constant and satisfies $V<1$. 
Then
 the first equation in (\ref{S7E10}) implies that $U$ decreases
exponentially. More preciesely, let us assume that $U=1$ for $X=X^{-}.$ Then, the asymptotics
of $U,\ V$ are
\begin{equation}
V\sim\left(  1-a^{-}\right)  \ \ ,\ \ \ \ U\sim\exp\left(  -a_{-}\left(
X-X^{-}\right)  \right)  \label{S9E2}%
\end{equation}
for some $a^{-}\in\left(  0,1\right)  $.
The asymptotics (\ref{S9E2}) will be shown to match with suitable solutions
among the ones described in Theorem \ref{solInt}. Moreover, notice that
(\ref{S7E10}) implies that these asymptotics are valid as long as $U$ remains
larger than some small number $\varepsilon_{0}$ that we can assume to be of
order one, although small. The exponential decay of $U$ in (\ref{S9E2})
implies that the transition between $U=1$ and $U=\varepsilon_{0}$ takes place
on a length scale $\left(  X-X^{-}\right)  $ of order one.

We now describe  the solution when $U$ is less than $\varepsilon_0$. For this
purpose
let us define by $a_{-}$ and $a_{+}$ the two roots of
the equation $\hat{E}=-V+\log\left(
V\right) $. Then, 
 in the limit $\lambda\rightarrow0^{+}$, the curves given by
(\ref{S9E1}) in the plane $\left(  U,V\right)  $
 are approximately
the two horizontal lines $\left\{  V=1-a_{-}\right\} $ and $\left\{
V=1+a_{+}\right\}$ 
 connected by  a  curve that is approximately vertical. Along this connecting curve 
 $V$ changes an amount of order one and
 $\lambda\log\left(  U\right)  $ continues to be of order one since $\hat E$ is
preserved.
When $U$ is less than $\varepsilon_{0}$ we can approximate the second
equation in (\ref{S7E10}) as $\frac{dV}{dX}=\lambda\left(  1+O\left(
\varepsilon_{0}\right)  \right)  V.$ Then $V\left(  X\right)  $ increases
exponentially. The first equation in (\ref{S7E10}) 
 indicates that for the
solutions of (\ref{S7E10}) under consideration, $U$ remains small while $V$
changes from $\left(  1-a_{-}\right)  $ to$\ \left(  1+a_{+}\right)  .$ Then
we can use the approximation
\begin{equation}
V=\left(  1-a_{-}\right)  \exp\left(  \lambda\left(  1+O\left(  \varepsilon
_{0}\right)  \right)  \left(  X-X^{-}-\ell_{trans}^{-}\right)  \right)
\label{S9E3}%
\end{equation}
where $\ell_{trans}^{-}$ is the range of $X$ that it takes for $U$ to decrease
from $U=1$ to $U=\varepsilon_{0}.$ We recall that $\ell_{trans}^{-}$ is of
order one.

Notice that the structure of the curve (\ref{S9E1}) implies that eventually
$U$ becomes again of order $\varepsilon_{0}$ with $V$ close to $\left(
1+a_{+}\right)  .$ It then follows from (\ref{S7E10}) that for small
$\lambda,$ $U$ reaches again the value $1$ at  $X^{+}$ after
an additional length
$\ell_{trans}^{+}$ of order one. Notice that, using (\ref{S9E3}), we have
\begin{equation}
V=\left(  1-a_{-}\right)  \exp\left(  \lambda\left(  1+O\left(  \varepsilon
_{0}\right)  \right)  \left(  X^{+}-X^{-}-\ell_{trans}^{+}-\ell_{trans}%
^{-}\right)  \right)  \label{S9E4}%
\end{equation}
and $U$ has the behaviour, for $\varepsilon_{0}\leq U\ll 
\frac{1}{\lambda}$,
\begin{equation}
U\sim\exp\left(  a_{+}\left(  X-X^{+}\right)  \right)\,.  \label{S9E5}%
\end{equation}
We need to estimate the relation between $a_{-}$ and $a_{+}.$ From
(\ref{S9E1}) we deduce that to leading order
\begin{equation}
\log\left(  1-a_{-}\right)  +a_{-}=\log\left(  1+a_{+}\right)  -a_{+}\,.
\label{S9E6}%
\end{equation}

On the other hand (\ref{S9E4}) allows us to obtain an approximation for the
transition length $\left(  X^{+}-X^{-}\right)  .$ Indeed, we have
\[
\left(  1+a_{+}\right)  =\left(  1-a_{-}\right)  \exp\left(  \lambda\left(
1+O\left(  \varepsilon_{0}\right)  \right)  \left(  X^{+}-X^{-}-\ell
_{trans}^{+}-\ell_{trans}^{-}\right)  \right)
\]
whence, since $\ell_{trans}^{+},\ \ell_{trans}^{-}$ are of order one and
$\varepsilon_{0}$ can be made arbitrarily small
\begin{equation}
\left(  X^{+}-X^{-}\right)  \sim\frac{1}{\lambda}\log\left(  \frac{1+a_{+}%
}{1-a_{-}}\right)  \label{S9E7}%
\end{equation}
as $\lambda\rightarrow0^{+}.$

We have described the transition of the solution $\left(  U,V,H\right)  $ of
(\ref{S3E10})-(\ref{S3E13}) for small $\lambda$ and $U\ll \frac{1}{\lambda}.$
The following result will play a crucial role in describing the solutions,
because it will show that the amplitude of the oscillations increases with
$X.$

\begin{lemma}
\label{L1}Suppose that $a_{-}>0,$ $a_{+}>0$ satisfy (\ref{S9E6}). Then
$a_{+}>a_{-}$.
\end{lemma}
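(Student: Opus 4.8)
The plan is to study the function $\psi(s) = \log(1-s) + s$ on $s \in (0,1)$ and $\phi(t) = \log(1+t) - t$ on $t > 0$, and to read \eqref{S9E6} as saying $\psi(a_-) = \phi(a_+)$. First I would compute derivatives: $\psi'(s) = -\frac{1}{1-s} + 1 = \frac{-s}{1-s} < 0$ on $(0,1)$, so $\psi$ is strictly decreasing from $\psi(0)=0$ to $-\infty$ as $s \to 1^-$; similarly $\phi'(t) = \frac{1}{1+t} - 1 = \frac{-t}{1+t} < 0$ for $t>0$, so $\phi$ is strictly decreasing from $\phi(0)=0$ to $-\infty$. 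Hence both sides of \eqref{S9E6} are negative, and given $a_- \in (0,1)$ there is a unique $a_+ > 0$ with $\phi(a_+) = \psi(a_-)$; the content of the lemma is that this $a_+$ exceeds $a_-$.

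The key step is a pointwise comparison: I claim $\phi(t) > \psi(t)$ for every $t \in (0,1)$, i.e. $\log(1+t) - t > \log(1-t) + t$, equivalently $\log\!\frac{1+t}{1-t} > 2t$. This is a standard inequality: the function $q(t) = \log\frac{1+t}{1-t} - 2t$ satisfies $q(0)=0$ and $q'(t) = \frac{1}{1+t} + \frac{1}{1-t} - 2 = \frac{2}{1-t^2} - 2 = \frac{2t^2}{1-t^2} > 0$ on $(0,1)$, so $q > 0$ there. (Equivalently one expands $\log\frac{1+t}{1-t} = 2\sum_{k\ge 0} t^{2k+1}/(2k+1) > 2t$.) Thus on the common interval $(0,1)$ we have $\phi > \psi$.

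From here the conclusion is immediate by monotonicity. Suppose for contradiction that $a_+ \le a_-$. Since $a_- < 1$ we would then have $a_+ \in (0,1)$, so the pointwise inequality applies at $t = a_+$, giving $\psi(a_-) = \phi(a_+) > \psi(a_+)$. But $\psi$ is strictly decreasing, and $a_+ \le a_-$ forces $\psi(a_+) \ge \psi(a_-)$, a contradiction. Hence $a_+ > a_-$.

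The argument is essentially routine; the only point requiring slight care is to guarantee $a_+ \in (0,1)$ before invoking the pointwise comparison — but this is automatic under the hypothesis $a_+ \le a_-$ together with $a_- \in (0,1)$, which is exactly the situation one is in when deriving a contradiction, so no separate case analysis for $a_+ \ge 1$ is needed. (Independently, $a_+ \ge 1$ would directly give $a_+ > a_-$ and nothing is to prove.) I would present it in this contradiction form, or equivalently directly: apply $\psi^{-1}$ — noting $\psi^{-1}$ is decreasing — to $\psi(a_-) = \phi(a_+) > \psi(a_+)$ to get $a_- < a_+$.
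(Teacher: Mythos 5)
Your proof is correct and follows essentially the same route as the paper's: the heart of both is the elementary inequality $\log(1+x)-x > \log(1-x)+x$ on $(0,1)$, which the paper derives from $\log\frac{1+x}{1-x}=\int_0^x \frac{2\,dr}{1-r^2}>2x$ and which you derive equivalently by differentiating $q(t)=\log\frac{1+t}{1-t}-2t$. You spell out the monotonicity step needed to pass from the pointwise inequality to $a_+>a_-$, which the paper leaves implicit, but the argument is the same one.
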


\begin{proof}
This follows from the inequality $\log\left(  1+x\right)  -x>\log\left(
1-x\right)  +x$ that is just a consequence of
$\log\left(  1+x\right)  -\log\left(  1-x\right)  =\int_{0}^{x}\frac
{2dr}{\left(  1-r^{2}\right)  }>2x$.
\end{proof}

\subsection{Coupling of the regimes}

\label{asymptPeaks}

\subsubsection{Description of the iterative procedure\label{iteration}}

We now describe the behaviour of the solutions of (\ref{S3E10})-(\ref{S3E13})
for $\lambda\rightarrow0$ and $\lambda E\approx1,$ with $E$ as in
(\ref{S6E6}). It turns out that these solutions can be described for this
range of values by a sequence of intervals  where the solutions can be
approximated alternately by solutions of (\ref{S7E7}), (\ref{S7E9})
or by solutions of (\ref{S7E10}) with both types of regions connected with
 suitable matching regions.

More precisely, suppose that $X_{n}^{+}$ is a point where
$U\left(  X_{n}^{+}\right)  =1\ ,\ \ V\left(  X_{n}^{+}\right)  =1+a_{n}$.
Notice that at such a point we can expect to be able to use the approximation
(\ref{S7E10}). This will be seen matching the ODE-Integrodifferential equation
regime with the  ODE regime described in Section
\ref{ODELolkaVolterra}. We will assume for the moment that this approximation
is valid. Then $H$ is also of order one and we can approximate it to the
leading order as $H=UV$ (cf. Section \ref{Ss.ode}). We can also use the
approximation (\ref{S9E5}) with $a_{+}=a_{n}$ and  approximate $V$ by a
constant. Then we obtain for $\left\vert X-X_{n}^{+}\right\vert $ of order one that
\begin{equation}
U\sim\exp\left(  a_{n}\left(  X-X_{n}^{+}\right)  \right)  \ \ ,\ \ V\sim
1+a_{n}\ \ ,\ \ H\sim\left(  1+a_{n}\right)  \exp\left(  a_{n}\left(
X-X_{n}^{+}\right)  \right)\,.  \label{T2E1}%
\end{equation}

We can now match the asymptotics (\ref{T2E1}) with those obtained for the
solutions of (\ref{S7E7}), (\ref{S7E9}). We use (\ref{S7E1}) to obtain the
 matching condition
\begin{equation}%
\begin{split}
\mathcal{U}\left(  X\right)  \sim\lambda\exp\left(  a_{n}\left(  X-X_{n}%
^{+}\right)  \right)  \   & ,\ \ \mathcal{V}\left(  X\right)  \sim\left(
1+a_{n}\right)\,, \\
\mathcal{H}\left(  X\right)  &  \sim\lambda\left(  1+a_{n}\right)  \exp\left(
a_{n}\left(  X-X_{n}^{+}\right)  \right)\,.
\end{split}
\label{T2E2}%
\end{equation}

This behaviour must be matched with the one for the solutions of (\ref{S7E7}),
(\ref{S7E9}) obtained in Theorem \ref{ThIntAs}. More precisely we will match
these behaviours with the ones of
\[
\Phi_{n}\left(  X\right)  =\left(  \mathcal{U}_{a_n}\left(  X-X_{n}^{0}\right)
,\mathcal{V}_{a_n}\left(  X-X_{n}^{0}\right)  ,\mathcal{H}_{a_n}\left(
X-X_{n}^{0}\right)  \right)
\]
for a suitable choice of
$a_n$ and $ X_{n}^{0}.$ The asymptotics of $\mathcal{V}\left(  X\right)  $ combined
with the one in (\ref{T1E3}) yield $a=a_{n},$ since in the matching region we
expect to have $\left(  X-X_{n}^{0}\right)  \rightarrow-\infty.$ On the other
hand (\ref{T1E5}), (\ref{T1E6}) give the choice
\begin{equation}
X_{n}^{0}=X_{n}^{+}+\frac{1}{a_n}\log\left(  \frac{2a_{n}}{\left(  1+a_{n}\right)
\Gamma\left(  a_{n}\right)  }\frac{1}{\lambda}\right)\,.  \label{T2E3}%
\end{equation}
With these choices of $a_{n}$ and $ X_{n}^{0}$ we obtain, using (\ref{T1E3}),
(\ref{T1E5}), (\ref{T1E6}) and (\ref{T2E2}), a matching to the first order
between (\ref{T2E1}) and $\Phi_{n}\left(  X\right)  $ in the common region of
validity where
$\left(  X-X_{n}^{+}\right)  \gg 1$ and $ \left(  X-X_{n}^{0}\right)  \ll 1$.

We can then use the approximation $\Phi_{n}\left(  X\right)  $ to describe the
solutions of (\ref{S3E10})-(\ref{S3E13}) for small $\lambda$ and $\left(
X-X_{n}^{0}\right)  $ of order one. This approximation breaks down for
$\left(  X-X_{n}^{0}\right)  $ sufficiently large. Indeed, (\ref{T1E5}) and
(\ref{T1E6}) imply that $\mathcal{U}_{a}\left(  X-X_{n}^{0}\right)  $ and
$\mathcal{H}_{a}\left(  X-X_{n}^{0}\right)  $ converge exponentially to zero
as $\left(  X-X_{n}^{0}\right)  \rightarrow\infty.$ However, the approximation
(\ref{S7E7}), (\ref{S7E9}) is only valid if $\left\vert \left(  \mathcal{U}%
\left(  X\right)  ,\mathcal{H}\left(  X\right)  \right)  \right\vert
\gg \lambda.$ In the region where $\left\vert \left(  \mathcal{U}\left(
X\right)  ,\mathcal{H}\left(  X\right)  \right)  \right\vert $ becomes of
order $\lambda$ we must use again the approximation (\ref{S7E10}). In
particular this region can be described using the analysis in Section
\ref{Ss.odeanalysis}. The asymptotics of the solutions for $U$ of order one is
as in (\ref{S9E2}) for suitable choices of $a_{-},\ X^{-}.$ More precisely,
since this matching is made for $\left(  X-X_{n}^{0}\right)  \gg 1$ we can use
the asymptotics (\ref{T1E3}), (\ref{T1E5}), (\ref{T1E6}) for $X\rightarrow
\infty.$ We then obtain, using also the rescaling (\ref{S7E1}), the matching
conditions
\[%
\begin{split}
U\left(  X\right)  \sim-\frac{2a_{n}}{\Gamma\left(  -a_{n}\right)  \left(
1-a_{n}\right)  \lambda}e^{-a_{n}\left(  X-X_{n}^{0}\right)  }\ , &
\ \ \ V\left(  X\right)  \sim\left(  1-a_{n}\right) \\
H\left(  X\right)     \sim-\frac{2a_{n}}{\Gamma\left(  -a_{n}\right)
\lambda}e^{-a_{n}\left(  X-X_{n}^{0}\right)  } & \qquad \mbox{ for } 
\left(  X-X_{n}^{0}\right)  \gg 1,\ \ U\gg 1\,.%
\end{split}
\]
We now match these formulas with (\ref{S9E2}) (combined with the approximation
$H=UV$). To this end we must choose
\begin{equation}
a_{-}=a_{n}\  ,\ \ \ X_{n}^{-}=X_{n}^{0}+\frac{1}{a_n}\log\left(  -\frac{2a_{n}%
}{\left(  1-a_{n}\right)  \Gamma\left(  -a_{n}\right)  }\frac{1}{\lambda
}\right) \,. \label{T2E4}%
\end{equation}

We remark that all this analysis will be meaningful only if $0<a_{n}<1.$
Therefore $\Gamma\left(  -a_{n}\right)  <0.$

The region where $U\leq1,\ H\leq1$ can then be described using the analysis in
Section \ref{Ss.odeanalysis}. The conclusion of this analysis is that $\left(
U,V\right)  $ moves close to the point $\left(  1,\left(  1+a_{n+1}\right)
\right)  $ for some suitable $a_{n+1}$ in a characteristic length given by
(\ref{S9E7}). More precisely if we define (cf. (\ref{S9E6}))
\begin{equation}
\log\left(  1-a_{n}\right)  +a_{n}=\log\left(  1+a_{n+1}\right)
-a_{n+1}\ \ ,\ \ a_{n}>0\ \ ,\ \ a_{n+1}>0 \label{T2E5}%
\end{equation}

\begin{equation}
X_{n+1}^{+}=X_{n}^{-}+\frac{1}{\lambda}\log\left(  \frac{1+a_{n+1}}{1-a_{n}%
}\right)  \label{T2E6}%
\end{equation}
we can approximate $U,\ V$ by means of (\ref{T2E1}) with $X_{n}^{+}$ replaced
by $X_{n+1}^{+}.$

Combining (\ref{T2E3}), (\ref{T2E4}) and (\ref{T2E6}) we obtain to the leading
order%
\begin{equation}
X_{n+1}^{+}-X_{n}^{+}\sim\frac{1}{\lambda}\log\left(  \frac{1+a_{n+1}}%
{1-a_{n}}\right)\,.  \label{T2E6a}%
\end{equation}

\begin{figure}[ht!]
\centering{%
\includegraphics[width=.46\textwidth]{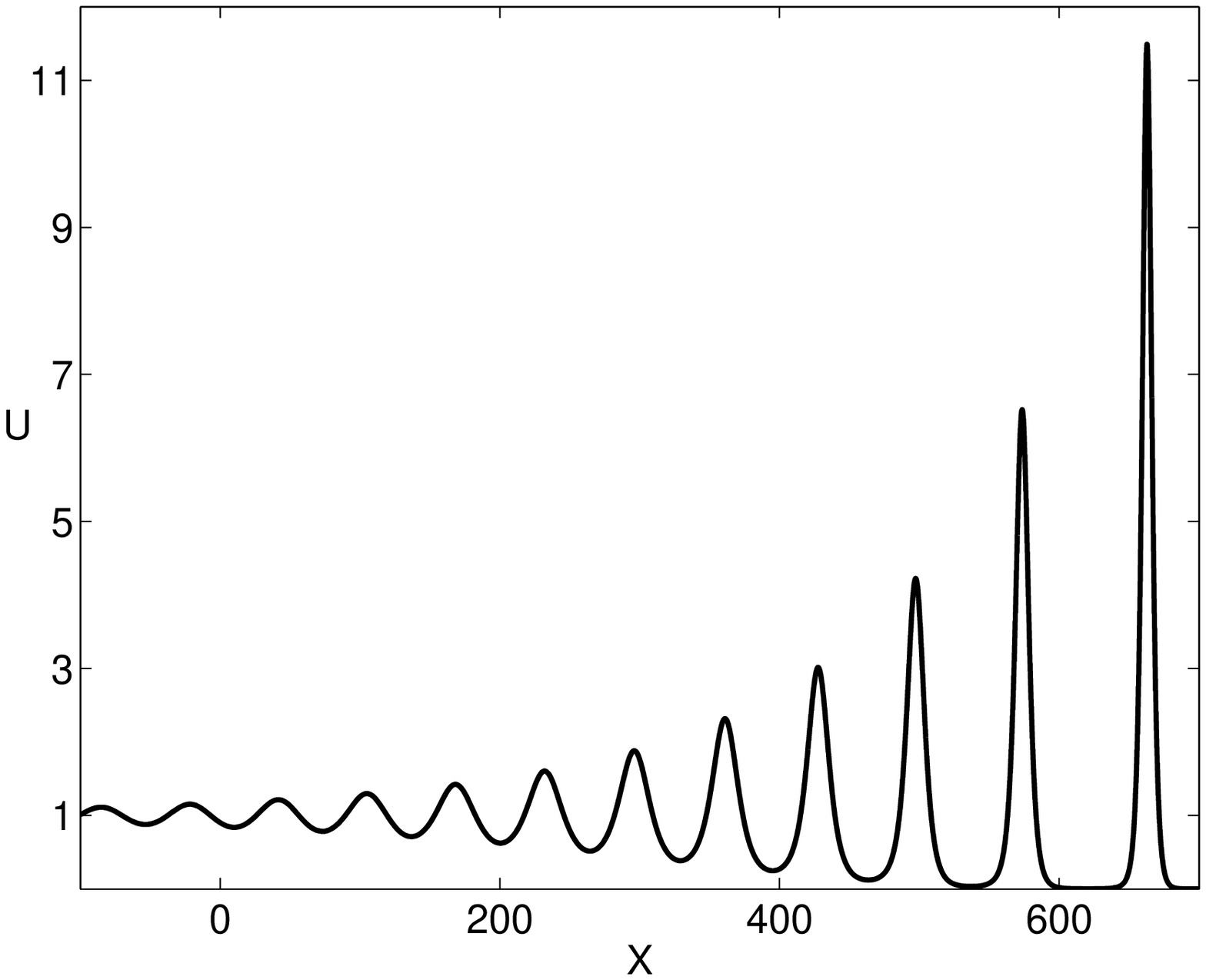}
\includegraphics[width=.46\textwidth]{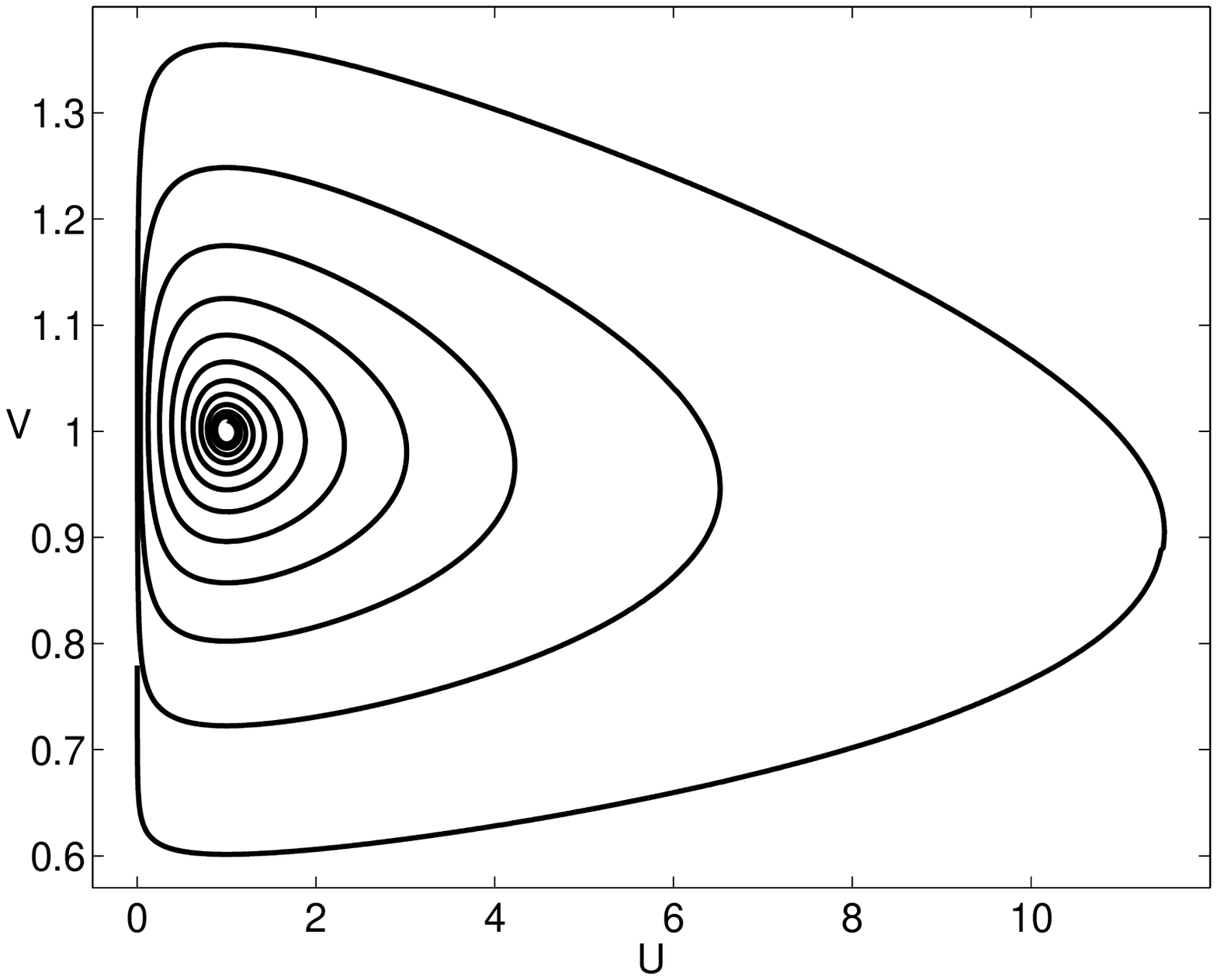}
}%
\caption{Intermediate regime: $U$ (left) and phase plane for $U,V$ (right)}
\label{figure2}
\end{figure}

\subsubsection{Matching with the ODE regime}

We first remark that equation (\ref{S5E6}) agrees
with the approximating equations (\ref{S7E7}), (\ref{S7E9}) and (\ref{S7E10})
if $E\gg 1,\ E\ll \frac{1}{\lambda}$ in their respective regimes of validity.

Indeed, we can rewrite (\ref{S5E6}) using (\ref{S5E5}) as
\begin{equation}
\frac{dU}{dX}=U\left(  V-1\right)  +\lambda U\left(  U-1\right)
\ \ \ ,\ \ \ \frac{dV}{dX}=\lambda\left(  1-U\right)  V \label{T2E7}%
\end{equation}
and these equations yield the same behaviour as (\ref{S7E10}) if $U$ is bounded.

On the other hand, in order to approximate the solutions of (\ref{S7E7}),
(\ref{S7E9}) for $E\gg 1,\ E\ll \frac{1}{\lambda}$ we use the fact that for small
$a,$ the solutions of (\ref{S7E7}), (\ref{S7E9}) in Theorem \ref{ThIntAs} have
the characteristic length scale $\frac{1}{a}.$ Then, for $a$ small, which
corresponds to the matching region indicated above, we have a characteristic
length very large compared with one. Then, as in (\ref{S5E2}) we can approximate $I\left[
\mathcal{H}\right] \sim \lambda \mathcal{H} \mathcal{U}$.  
Since $E\ll \frac{1}{\lambda}$ we have $\mathcal{U}\left(  X\right)  \ll 1$
and we obtain from (\ref{S7E7}) the approximation
$\mathcal{H}\left(  X\right)  =\mathcal{U}\left(  X\right)  \mathcal{V}\left(
X\right)  +\left(  \mathcal{U}\left(  X\right)  \right)  ^{2}\mathcal{V}%
\left(  X\right)$
and plugging this formula into (\ref{S7E9}) and using the rescaling
(\ref{S7E1}) we obtain
\[
\frac{dU}{dX}=U\left(  V-1\right)  +\lambda\left(  U\right)  ^{2}%
V\ \ ,\ \ \frac{dV}{dX}=-\lambda UV-\lambda^{2}\left(  U\right)  ^{2}V
\]
that agrees with (\ref{T2E7}) in the range of values $U\gg 1,\ U\ll \frac
{1}{\lambda}.$

The previous computations show that the equations used in both transition regimes
agree in the intermediate matching regime. Actually it is possible to check
that the main characteristics of the computed solution also agree, as  could
be expected. Notice that the length scale $\left(  X_{n+1}^{+}-X_{n}%
^{+}\right)  $ can be computed using the 
 function $T\left(  E\right)  $ in (\ref{S6E5a}) for
$E\gg 1,\ E\ll \frac{1}{\lambda}$:
\begin{equation}
T\left(  E\right)  \sim2\sqrt{2}\sqrt{E}\,. \ \label{T3E1}%
\end{equation}

In the computation of this integral, we have split the region of integration
into the two subsets $\left(  U_{-}\left(  E,0\right)  ,\varepsilon_{0}\right)
$ and $\left(  \varepsilon_{0},U_{+}\left(  E,0\right)  \right)  $ where
$\varepsilon_{0}>0$ is a small but fixed number. It turns out that the
contribution of the second integral is much smaller than the one due to the
first integrand, exactly in the same manner that the contributions in $\left(
X_{n+1}^{+}-X_{n}^{+}\right)  $ computed in Subsection \ref{iteration} are due
mostly to the region with small values of $U.$ Using the rescaling
(\ref{S5E5}) we then obtain the  approximation 
\begin{equation}
\left(  X_{n+1}^{+}-X_{n}^{+}\right)  \sim\frac{2\sqrt{2}\sqrt{E}}%
{\sqrt{\lambda}}\,. \label{T3E2}%
\end{equation}

On the other hand, we can compute the same length using (\ref{T2E6a}). Taking
into account that in the matching region $a_{n},\ a_{n+1}$ are small, we
obtain from (\ref{T2E5}) that
\begin{equation}
a_{n+1}\sim a_{n}+\frac{2}{3}\left(  a_{n}\right)  ^{2}\,. \label{T3E3}%
\end{equation}
Using  (\ref{T2E6a})  we  then obtain to
 leading order that
\begin{equation}
X_{n+1}^{+}-X_{n}^{+}\sim\frac{\left(  a_{n}+a_{n+1}\right)  }{\lambda}%
\sim\frac{2a_{n}}{\lambda} \,.\label{T3E4}%
\end{equation}
Next, equation (\ref{S6E6}) and the definition of $a_{n}$ by means of $U=1,$ imply
\begin{equation}
a_{n}=\sqrt{2}\sqrt{E}\sqrt{\lambda} \label{T3E4a}%
\end{equation}
and plugging (\ref{T3E4a}) into (\ref{T3E4}) we obtain the sought-for matching with
(\ref{T3E2}).

Finally we obtain the matching for the formula of the change of energy in each
iteration. We have obtained using the ODE approximation that the
change of energy in each cycle is given by (\ref{S6E7}) where in the matching
region $1\ll E\ll \frac{1}{\lambda}$ we can use the approximation (\ref{S6E8a}).
Then
\begin{equation}
E_{n+1}-E_{n}\sim\frac{4\sqrt{2}}{3}E_{n}^{\frac{3}{2}}\,. \label{T3E5}%
\end{equation}
On the other hand, using again $a_{n}=\sqrt{2}\sqrt{E}\sqrt{\lambda}$ and
plugging it in (\ref{T3E3}) we obtain again (\ref{T3E5}) and this yields the
desired matching.

\section{Shooting argument}

\label{S.shooting}

The structure of the solutions of (\ref{S3E10})-(\ref{S3E13}) described in
Section \ref{asymptPeaks} by means of alternate approximate solutions of
(\ref{S7E7})-(\ref{S7E9}) and (\ref{S7E10}) yields several sequences of
numbers $\left\{  a_{n}\right\}  ,\ \left\{  X_{n}^{+}\right\}  ,\ \left\{
X_{n}^{-}\right\}  ,\ \left\{  X_{n}^{0}\right\}  $ that measure the amplitude
of the oscillations, the positions of the points where $U=1,\ V>1,\ U=1,\ V<1$
and the position of the peaks where $U$ and $H$ are of order $\frac{1}%
{\lambda}$ respectively. It is relevant to notice that the sequence $\left\{
a_{n}\right\}  $ is increasing due to Lemma \ref{L1}. The values of $a_{n}$
approach zero in the matching region (see the approximation (\ref{T3E4a}) for
$1\ll E\ll \frac{1}{\lambda}$), but they become of order one for $E\approx\frac
{1}{\lambda}$ with increases in each iteration of order one. Eventually, this
sequence of points becomes larger than or equal to one. 

A detailed study of the solutions of (\ref{S3E10})-(\ref{S3E13}) shows that
there are several possibilities. If $a_{n}$ becomes strictly larger than one,
Theorem \ref{ThIntAs} shows that during the peak regime $V$
becomes strictly negative. In particular $V$ reaches the value $V=0$ at some
finite $X_{\ast}.$ If $V$ remains positive for larger values of $X$, something
that could happen if $a_{n}=1,$ it would be possible to approximate
(\ref{S3E10})-(\ref{S3E13}) by means of (\ref{S7E10}). Therefore $V$ would
increase, and the value of $a_{n}$ would be increased to a new value
$a_{n+1}>1$ at the next iteration. The only possibility of not having such a
behaviour would be with $V\rightarrow0$ as $X\rightarrow\infty.$ Such
approximation to zero cannot be described by the approximate equation
(\ref{S7E10}), because in such a regime the integral terms in (\ref{S3E10}%
)-(\ref{S3E13}) must be taken into account in full detail.

We now recall that for any $K\in\mathbb{C}$ there exists a unique solution of
(\ref{S3E10})-(\ref{S3E13}) satisfying (\ref{S4E0}). The discussion in Section
\ref{Ss.variables} shows that all the solutions of (\ref{S3E10})-(\ref{S3E13})
with such a behaviour can be obtained, up to rescaling, by choosing $K$ in the
interval $\left[  1,\exp\left(  \frac{2\pi\beta\left(  \lambda\right)
}{\alpha\left(  \lambda\right)  }\right)  \right)  .$ A continuity argument
taking into account the behaviour of the trajectories shows that there exists
a value of $K=K_{\ast}$ in this interval for which the corresponding solution
of (\ref{S3E10})-(\ref{S3E13}) satisfies $\lim_{X\rightarrow\infty}V\left(
X\right)  =0.$ This also implies $\lim_{X\rightarrow\infty}H\left(  X\right)
=0$ and $\lim_{X\rightarrow\infty}U\left(  X\right)  =0$ using (\ref{S3E10}),
(\ref{S3E12}). More precisely, the inversion of (\ref{S3E10}) allows us to write
$H\left(  X\right)  $ in terms of $UV.$ Then $\lim_{X\rightarrow\infty
}H\left(  X\right)  =0$ and (\ref{S3E12}) yields $\lim_{X\rightarrow\infty
}U\left(  X\right)  =0.$ 

\section{Asymptotics as $X\rightarrow\infty$}

\label{S.asymptotics}

We finally describe the asymptotics of the solution of (\ref{S3E10}%
)-(\ref{S3E13}) satisfying (\ref{S4E0}) with $K=K_{\ast}$ described in the
previous Section. We have seen that $
\lim_{X\rightarrow\infty}U\left(  X\right)  =\lim_{X\rightarrow\infty}V\left(
X\right)  =\lim_{X\rightarrow\infty}H\left(  X\right)  =0\,.$
To  leading order the behaviour of this trajectory is described by the
solution of (\ref{S7E7}), (\ref{S7E9}) given in Theorem \ref{ThIntAs} with
$a=1.$ The  asymptotics 
\begin{equation}
H\left(  X\right)  \sim\frac{2e^{\left(  X-X_{n}^{0}\right)  }e^{-e^{\left(
X-X_{n}^{0}\right)  }}}{\lambda} \label{T4E1}%
\end{equation}
is valid as long as $H\gg 1,\ \left(
X-X^0_{n}\right)  \gg 1$,
where, by assumption, to the leading order $a_{n}=1.$ It is convenient to
reformulate (\ref{T4E1}) in the original variables (cf. (\ref{S3E8}),
(\ref{S3E9}))
\begin{equation}
h\left(  x\right)  \sim\frac{2}{\lambda}\left(  \frac{x}{x_{n}}\right)
\exp\left(  -\left(  \frac{x}{x_{n}}\right)  \right)  \label{T4E2}%
\end{equation}
that is valid for $\frac{x}{x_{n}}\gg 1,\ h\gg 1.$

We can match (\ref{T4E2}) with the following exponential asymptotics for the
solutions of (\ref{S2E5}) as $x\rightarrow\infty$:
\begin{equation}
h\left(  x\right)  \sim\frac{2}{\lambda}
\left(  \kappa x\right)  \exp\left(  -\kappa x\right)
\label{T4E3}%
\end{equation}

The asymptotics (\ref{T4E2}), (\ref{T4E3}) match for small $\lambda$ if
$\kappa=\frac{1}{x_{n}}.$ This gives the desired exponential decay for the
obtained solution.

\section{Self-consistency of the approximations of
$I\left[  H\right]$}
\label{S.consistency}

We now show that the solution we obtained is self-consistent with
the approximations made for the integral operator $I\left[  H\right]  \left(
X\right)  $ defined in (\ref{S3E13}). We have made two main approximations.
The first one is (\ref{E1}), that approximates $I\left[  H\right]  $ by an
operator with a simple dependence on $\lambda.$ The second approximation is
(\ref{S5E2}) and it allows us to replace the integral operator by a much simpler
local operator.

Concerning the validity of (\ref{E1}) we remark that its precise meaning is that the error
made in the approximation is smaller than the right-hand side. In order to
check its validity we distinguish between the regions where $H$ is of order
$\frac{1}{\lambda}$ and the regions where $H$ is smaller than that quantity.

Notice that for any region we can expect the errors made in the approximation
to be of order
\begin{align*}
R_{1}  & =\lambda^{2}\int_{-\infty}^{X}dYe^{\left(  Y-X\right)  }\left(
X-Y\right)  H\left(  Y\right)  \int_{\log\left(  e^{X}-e^{Y}\right)  }%
^{X}dZH\left(  Z\right) \,, \\
R_{2}  & =\lambda^{2}\int_{-\infty}^{X}dYe^{\left(  Y-X\right)  }H\left(
Y\right)  \int_{\log\left(  e^{X}-e^{Y}\right)  }^{X}dZ\left(  X-Z\right)
H\left(  Z\right)
\end{align*}
and we can expect these two terms to be very small compared with the
right-hand side of (\ref{E1}). Indeed, in the case of $R_{1}$, the
contribution due to the region $\lambda\left(  X-Y\right)  \leq\varepsilon
_{0}$ is small compared with the term in (\ref{E1}) if $\varepsilon_{0}$ is
small. On the other hand, if $\lambda\left(  X-Y\right)  >\varepsilon_{0}$ we
can use the smallness of the exponential factor $e^{\left(  Y-X\right)  }$ to
obtain estimates for the corresponding terms in $R_{1}$ as $C\lambda
^{2}\left\Vert H\right\Vert _{\infty}^{2}e^{-\frac{\varepsilon_{0}}{\lambda}}$
and since $\left\Vert H\right\Vert _{\infty}\leq C/\lambda$ it then follows
that this contribution is exponentially small. Concerning $R_{2}$ we can argue
similarly for $\lambda\left(  X-Z\right)  $ smaller than $\varepsilon_{0}$. If
$\lambda\left(  X-Z\right)  >\varepsilon_{0}$, since $Z\geq X+\log\left(
1-e^{Y-X}\right)  $ it follows that the size of the region of integration can
be bounded by $Ce^{-\frac{\varepsilon_{0}}{\lambda}}$ and therefore, it gives
also a very small contribution.

Concerning the approximations of $I\left[  H\right]  $ by means of local terms
that have been made in the derivation of the ODE approximations (\ref{S5E6})
or (\ref{S7E10}) two main ingredients are required. In the derivation of
(\ref{S5E6}) we have used just the fact that $H\left(  Y\right)  $ has
significant changes over distances much longer than one. Since the order of
magnitude of $H$ is roughly the same for the range of values described by
means of (\ref{S5E6}) no special care is required to estimate the values of
$H\left(  Y\right)  $ with $X-Y\gg 1.$ On the other hand, in the case of the
approximation (\ref{S7E10}) we use the fact that $I\left[  H\right]  $ is a
corrective term that is completely ignored in (\ref{S7E10}). Notice that
$H\left(  Y\right)  $ takes values for $Y<X$ much larger than the ones of
$H\left(  X\right)  $ in the region where the approximation (\ref{S7E10}) is
used. However, due to the exponential factor $e^{\left(  Y-X\right)  }$ as
well as the exponential decay of $H\left(  Y\right)  $ in the region described
by the integro-differential equation (cf. Subsection \ref{iteration}) the
corresponding contribution of such  values of $H\left(  Y\right)  $ is
negligible.

\appendix
\section{Uniform estimates for the change of energy}

\label{A.energy}

During the part of the dynamics dominated by a perturbation of a conservative ODE it
has been assumed that the energy $E$ does not change  significantly
during each cycle of length $T\left(  E\right)  .$ This is the
justification for the adiabatic approximation and we will check now that, indeed,
 the adiabatic approximation yields such smallness for the variation of
the energy during the trajectory and thus the assumption is self-consistent.

Let us denote by $\xi_{0}\in\mathbb{R}$ a value where $U\left(  \xi
_{0}\right)  =1,\ V\left(  \xi_{0}\right)  >1.$ Our goal is to estimate this
quantity for $\xi_{0}\leq\xi\leq\xi_{0}+T\left(  E\left(  \xi_{0}\right)
\right)  ,\ $where $\xi_{0}+T\left(  E\left(  \xi_{0}\right)  \right)  $ is
the next value of $\xi$ where $U\left(  \xi\right)  =1$ and $V\left(
\xi\right)  >1$ again. Due to the invariance of (\ref{S5E6}) we can assume
without loss of generality that $\xi_{0}=0.$ Due to (\ref{S6E5}) and assuming
the adiabatic approximation we obtain the following approximation for the
change of the energy along a trajectory:
\begin{equation}
\frac{E\left(  \xi\right)  -E\left(  0\right)  }{\sqrt{\lambda}}=\int_{0}%
^{\xi}\left[  \left(  U_{0}\left(  s\right)  -1\right)  ^{2}+\left(
\omega_{0}\left(  s\right)  \right)  ^{2}\left(  1-U_{0}\left(  s\right)
\right)  \right]  ds=\sigma\left(  \xi\right)\,.  \label{A1E1}%
\end{equation}
Since
\[
\frac{d\sigma\left(  \xi\right)  }{d\xi}=\left(  U_{0}\left(  \xi\right)
-1\right)  \left[  \left(  U_{0}\left(  \xi\right)  -1\right)  -\left(
\omega_{0}\left(  \xi\right)  \right)  ^{2}\right]
\]
and given the form of the curves $\left(  U_{0},\omega_{0}\right)  $
satisfying (\ref{S6E4}) we obtain the existence of numbers $\xi_{k},\ k=1,2,3$
satisfying $0=\xi_{0}<\xi_{1}<\xi_{2}<\xi_{3}<\xi_{0}+T\left(  E\left(
\xi_{0}\right)  \right)  =\xi_{4}$ as well as
\begin{equation}
\frac{d\sigma\left(  \xi_{k}\right)  }{d\xi}=0\text{ , }k=0,1,2,3,4\,.
\label{A1E2}%
\end{equation}
Moreover
$\left(  U_{0}\left(  \xi_{0}\right)  -1\right)     =\left(  U_{0}\left(
\xi_{3}\right)  -1\right)  =\left(  U_{0}\left(  \xi_{4}\right)  -1\right)
=0 $, $
\sqrt{U_{0}\left(  \xi_{1}\right)  -1}    =\omega_{0}\left(  \xi_{1}\right)
=-\omega_{0}\left(  \xi_{2}\right)$
and
\begin{equation}
\frac{d\sigma\left(  \xi\right)  }{d\xi}<0\ \ \text{if\ \ }\xi\in\left(
\xi_{0},\xi_{1}\right)  \cup\left(  \xi_{2},\xi_{3}\right)   \ ,\ \ \frac
{d\sigma\left(  \xi\right)  }{d\xi}>0\ \ \text{if\ \ }\xi\in\left(  \xi
_{1},\xi_{2}\right)  \cup\left(  \xi_{3},\xi_{4}\right) \,. \label{A1E3}%
\end{equation}

The value of the total change of the energy during each cycle, namely
$\sigma\left(  \xi_{4}\right)  ,$ has been computed in (\ref{S6E7}),
(\ref{S6E8}) and approximated asymptotically as $E\rightarrow\infty$ (cf.
(\ref{S6E8a})). Notice that $\sigma\left(  \xi_{4}\right)  =\Phi\left(
E\right)  $, assuming that $E\left(  \xi_{0}\right)  =E.$

Due to (\ref{A1E2}), (\ref{A1E3}) 
 we need to compute $\sigma\left(  \xi_{1}\right)  , \sigma\left(
\xi_{2}\right)  , \sigma\left(  \xi_{3}\right)  $  in order to estimate the range
of variation of $\sigma$. We then have
\begin{equation}
\min\left\{  \sigma\left(  \xi_{1}\right)  ,\sigma\left(  \xi_{3}\right)
\right\}  \leq\sigma\left(  \xi\right)  \leq\max\left\{  \sigma\left(  \xi
_{2}\right)  ,\sigma\left(  \xi_{4}\right)  \right\}  \label{A1E3a}%
\end{equation}
where we use that $\sigma\left(  \xi_{4}\right)  >\sigma\left(  \xi
_{1}\right)  .$ In order to estimate these quantities we define an auxiliary
quantity $\Omega_{0}$ via $E = \Omega_0^2/2$ that measures the maximum value of $\omega_{0}$ for a
given value of $E$.
It will be more convenient to compute $\sigma\left(
\xi\right)  $ using $\omega_{0}$ as independent variable. To this end we
define $U_{-}\left(  \omega_{0};\Omega_{0}\right)  <1<U_{+}\left(  \omega
_{0};\Omega_{0}\right)  $ by means of the roots of the equation
\begin{equation}
\left(  U-1\right)  -\log\left(  U\right)  +\frac{\omega^{2}}{2}=\frac
{\Omega_{0}^{2}}{2}\,. \label{U2E3}%
\end{equation}

Taking into account (\ref{S6E3}) we can use $\omega_{0}$ as variable of
integration instead of $s$ in order to compute $\sigma\left(  \xi\right)  $
for $\xi\in\left(  0,\xi_{3}\right)  .$ Then, with $\omega_k:=
\omega_{0}\left(  \xi_{k}\right) $ we obtain
\begin{equation}
\sigma\left(  \xi_{k}\right)  =\sigma_{k}\left(  \Omega_{0}\right)
=\int_{\omega_{k}}^{\Omega_{0}}\left[  \left(  U_{+}\left(  \omega_{0}%
;\Omega_{0}\right)  -1\right)  -\left(  \omega_{0}\right)  ^{2}\right]
d\omega_{0}\ \ ,\ \ k=1,2,3\,, \label{A1E4}%
\end{equation}
where the integration is made along the curve defined by (\ref{U2E3}).
Notice that we use $\omega_{0}\left(  \xi_{3}\right)  =-\Omega_{0}.$

In order to approximate $\sigma\left(  \xi_{1}\right)  ,\ \sigma\left(
\xi_{2}\right)  ,\ \sigma\left(  \xi_{3}\right)  $ for small $\Omega_{0}$ we
use Taylor in (\ref{U2E3}) to obtain the approximation
\begin{equation}
\left(  U_{\pm}\left(  \omega_{0};\Omega_{0}\right)  -1\right)  ^{2}%
+\omega_{0}^{2}=\Omega_{0}^{2} \label{A1E5}%
\end{equation}
that combined with (\ref{A1E4}) yields
\[
\sigma\left(  \xi_{k}\right)  =\sigma_{k}\left(  \Omega_{0}\right)
=\int_{\omega_{k}}^{\Omega_{0}}\left[  \sqrt{\Omega_{0}^{2}-\omega_{0}^{2}%
}-\left(  \omega_{0}\right)  ^{2}\right]  d\omega_{0}%
\ \mbox{ as }\Omega_{0}\rightarrow0 \,.
\]
On the other hand, we can approximate $\omega_{1},\ \omega_{2}$ by 
$\sqrt{\Omega_{0}^{2}-\omega_{k}^{2}}=\left(  \omega_{k}\right)  ^{2}$ for $k=1,2$ 
whence
$\omega_{k}\sim\pm\Omega_{0}$ as $\Omega_{0}\rightarrow
0$ for $k=1,2$
and
\begin{equation}
\sigma\left(  \xi_{1}\right)  =o\left(  \Omega_{0}^{2}\right)  \ \ ,\ \ \sigma
\left(  \xi_{2}\right)  \sim\frac{\pi}{2}\Omega_{0}^{2}\ \ ,\ \ \ \sigma
\left(  \xi_{3}\right)  \sim\frac{\pi}{2}\Omega_{0}^{2}\text{\ \ as\ \ }%
\Omega_{0}\rightarrow0\,. \label{A1E6}%
\end{equation}

All these quantities must be compared with $\sigma\left(  \xi_{4}\right)  $
that can be computed using (\ref{S6E8}). The approximation (\ref{A1E5}) then
yields%
\begin{equation}
\sigma\left(  \xi_{4}\right)  \sim\pi\Omega_{0}^{2}\text{ \ as\ \ }\Omega
_{0}\rightarrow0 \,.\label{A1E7}%
\end{equation}

Combining (\ref{A1E6}), (\ref{A1E7}), as well as (\ref{A1E3a}) it follows
that
\[
o\left(  \Omega_{0}^{2}\right)  \leq\sigma\left(  \xi\right)  \leq\pi
\Omega_{0}^{2}\ \ ,\ \ \xi\in\left(  0,\xi_{4}\right)  \ \ \text{and }%
\Omega_{0}\rightarrow0^{+}\,.
\]

We can obtain a similar estimate for large values of $\Omega_{0}.$ Using
(\ref{U2E3}) we  obtain
\[
U_{+}\left(  \bar{\omega}_{0};\Omega_{0}\right)  -1\sim\frac{1}{2}\left(
\Omega_{0}^{2}-\bar{\omega}_{0}^{2}\right)  \ \ \text{as\ \ }\Omega
_{0}\rightarrow\infty
\]
that is valid as long as $\left(
U-1\right)  $ is large.

Since the region where $\left(  U-1\right)  $ is of order one gives a small
contribution to the integrals, we obtain the  approximations
\[
\sigma\left(  \xi_{1}\right)   
= - \Omega_0^3 \frac{\sqrt{3}}{9} = - \sigma (\xi_2)\,, \qquad \sigma(\xi_3)
= o(\Omega_0^3)
\qquad \mbox{ as } \Omega_{0}\rightarrow\infty. 
\]
Combining this with (\ref{S6E8a}) we obtain that
\[
-C\Omega_{0}^{3}\leq\sigma\left(  \xi\right)  \leq C\Omega_{0}^{3}%
\ \ ,\ \ \xi\in\left(  0,\xi_{4}\right)  \ \ \text{and }\Omega_{0}%
\rightarrow\infty
\]
for some $C>0$ independent of $\Omega_{0}.$ It then follows that the change of
energy during each cycle can be estimated by the final change. Therefore, this
justifies the adiabatic approximation.

\bigskip
\textbf{Acknowledgment:} BN and JJLV gratefully acknowledge the warm
atmosphere at the Isaac Newton Institute for Mathematical Sciences where part
of this work was done during the program on PDE in Kinetic Theories. This work
was also supported by the EPSRC Science and Innovation award to the Oxford
Centre for Nonlinear PDE (EP/E035027/1) and through the DGES Grant MTM2007-61755.

\bigskip


\begin{thebibliography}{99}                                                                                               %
\bibitem {AS}M. Abramowitz and I.~A. Stegun, \textit{Handbook of mathematical
functions with formulas, graphs and mathematical tables},  National Bureau of
Standards Applied Mathematics Series, \textbf{55} (1964)



\bibitem {Bertoin1}J. Bertoin, \textit{Eternal solutions to Smoluchowski's
coagulation equation with additive kernel and their probabilistic
interpretation}, Ann. Appl. Probab. \textbf{12 }, (2002), 547-564.

\bibitem {CM}J. Ca\~nizo and S. Mischler, \textit{ Regularity, asymptotic
behavior and partial uniqueness for Smoluchowski's coagulation equation}, to
appear in Rev. Mat. Iberoamericana, (2010)

\bibitem {Drake}R.~L. Drake, \textit{A general mathematical survey of the
coagulation equation}, In: Topics in Current Aerosol Research (part 2).
International Reviews in Aerosol Physics and Chemistry, Oxford. Pergamon Press
(1972), 203-376.

\bibitem {EMR}M. Escobedo, S. Mischler and M. Rodriguez Ricard, \textit{On
self-similarity and stationary problems for fragmentation and coagulation
models}, Ann. Inst. H. Poincar\'e Anal. Non Lin\'eaire \textbf{22}, (2005), 99-125.

\bibitem {EM}M. Escobedo and S. Mischler, \textit{ Dust and self-similarity
for the Smoluchowski coagulation equation},  Ann. Inst. H. Poincar\'e Anal.
Non Lin\'eaire, \textbf{23}, (2006), 331-362.

\bibitem {FilL1}F. Filbet \& P. Lauren\c{c}ot, \textit{ Numerical simulation
of the Smoluchowski equation},  SIAM J. Sc. Computing, \textbf{25}, (2004), 2004-2028.

\bibitem {FL1}N. Fournier \& P. Lauren\c{c}ot, \textit{ Existence of
self-similar solutions to Smoluchowski's coagulation equation}, Comm. Math.
Phys. \textbf{256}, (2005), 589-609.

\bibitem {FL2}N. Fournier \& P. Lauren\c{c}ot, \textit{ Local properties of
self-similar solutions to Smoluchowski's coagulation equations with sum
kernel}, Proc. Royal Soc. Edinb. \textbf{136 A}, (2005), 485-508



\bibitem {LM1}P. Lauren\c{c}ot and S. Mischler, \textit{ On coalescence
equations and related models},  Modeling and computational methods for kinetic
equations, Eds. P. Degond, L. Pareschi, G. Russo, Series Modeling and
Simulation in Science, Engineering and Technology, Birkh\"auser, (2004), 321-356.

\bibitem {Lee}M.~H. Lee, \textit{ A survey of numerical solutions to the
coagulation equation}, J. Phys. A \textbf{34 }, (2003), 10219-10241.



\bibitem {Le1}F. Leyvraz, \textit{ Scaling theory and exactly solvable models
in the kinetics of irreversible aggregation}, Phys. Reports, \textbf{383} 2/3, (2003), 95-212.

\bibitem {MP1}G. Menon and R.~L. Pego, \textit{ Approach to self-similarity in
Smoluchowski's coagulation equations}, Comm. Pure Appl. Math., \textbf{57 },
(2004), 1197-1232.

\bibitem {NV10}B. Niethammer and J.~J.~L. Vel\'azquez, \textit{Optimal bounds
for self-similar solutions to coagulation equations with product
kernel}, Comm. PDE, to appear (2011).

\bibitem {NV11}B. Niethammer and J.~J.~L. Vel\'azquez, \textit{Rigorous
construction of a self-similar solutions to a coagulation equation with
multiplicative kernel}, work in preparation.

\bibitem {Smolu}M. Smoluchowski, \textit{Drei Vortr\"age \"uber Diffusion,
Brownsche Molekularbewegung und Koagulation von Kolloidteilchen}, Physik.
Zeitschrift \textbf{17}, (1916), 557-599.

\bibitem {DE1}P.~G.~J. van Dongen and M.~H. Ernst, \textit{ Scaling solutions
of Smoluchowski's coagulation equation}, J. Stat. Phys., \textbf{50}, (1988), 295-329.
\end{thebibliography}
\end{document}